\newcommand{\ubar}[1]{\underaccent{\bar}{#1}}
\NewDocumentCommand{\mapr}{D(){{\alpha,\beta}}}{R_{#1}}
\newcommand{\leftrarrows}{\mathrel{\raise.75ex\hbox{\oalign{%
  $\scriptstyle\leftarrow$\cr
  \vrule width0pt height.5ex$\hfil\scriptstyle\relbar$\cr}}}}
\newcommand{\lrightarrows}{\mathrel{\raise.75ex\hbox{\oalign{%
  $\scriptstyle\relbar$\hfil\cr
  $\scriptstyle\vrule width0pt height.5ex\smash\rightarrow$\cr}}}}
\newcommand{\Rrelbar}{\mathrel{\raise.75ex\hbox{\oalign{%
  $\scriptstyle\relbar$\cr
  \vrule width0pt height.5ex$\scriptstyle\relbar$}}}}
\def\leftrightarrowsfill@{\arrowfill@\leftrarrows\Rrelbar\lrightarrows}
\newcommand{\xleftrightarrows}[2][]{\ext@arrow 3399\leftrightarrowsfill@{#1}{#2}}
\def\rightleftarrowsfill@{\arrowfill@\lrightarrows\Rrelbar\leftrarrows}
\newcommand{\xrightleftarrows}[2][]{\ext@arrow 3399\rightleftarrowsfill@{#1}{#2}}
\NewDocumentCommand{\Forall}{s}{
    \IfBooleanTF{#1}{
        \, \forall \,
    }{
        \text{ for all }
    }
}
\NewDocumentCommand{\rF}{
    O{R} d()
}{
    \IfNoValueTF{#2}{
        \mathbb{#1}
    }{
        \mathbb{#1}_{#2}
    }
} 
\NewDocumentCommand{\cF}{O{C}}{\mathbb{#1}} 
\NewDocumentCommand{\carprod}{m m}{#1 \times #2} 
\NewDocumentCommand{\ii}{s}{
    \IfBooleanTF{#1}{
        \mathbf{i}
    }{
        i
    }
}
\NewDocumentCommand{\ee}{sd()}{
    \IfBooleanTF{#1}{
        \IfNoValueTF{#2}{

        }{
            exp\left(#2\right)
        }
    }{
        \IfNoValueTF{#2}{
            e
        }{
            e^{#2}
        }
    }
}
\NewDocumentCommand{\tRe}{r()}{
    \mathrm{Re}\left(#1\right)
}
\NewDocumentCommand{\tIm}{r()}{
    \mathrm{Im}\left(#1\right)
}
\NewDocumentCommand{\sint}{sd()}{
    \IfBooleanTF{#1}{
        
    }{
        \mathrm{int}\,#2
    }
}
\NewDocumentCommand{\cl}{s d() O{cl}}{
    \IfBooleanTF{#1}{
        \bar{#2}
    }{
        \mathrm{#3}\,#2
    }    
}
\NewDocumentCommand{\setdiff}{d()d()}{%
    #1\backslash#2
}
\NewDocumentCommand{\matSp}{s O{C} m m}{
    \IfBooleanTF{#1}{
        #2^{#3\times #4}
    }{
        \mathbb{#2}^{#3\times #4}
    }
}
\NewDocumentCommand{\vecSp}{O{C} d()}{
    \IfNoValueTF{#2}{
        \mathbb{#1}
    }{
        \mathbb{#1}^{#2}
    }
} 
\NewDocumentCommand{\hilbertSp}{s o}{
    \IfBooleanTF{#1}{
        \IfNoValueTF{#2}{
            \mathbb{H}
        }{
            \mathbb{H}_{#2}
        }   
    }{
        \IfNoValueTF{#2}{
            \mathcal{H}
        }{
            \mathcal{H}_{#2}
        }   
    }
}
\NewDocumentCommand{\rank}{s r()}{
    \IfBooleanTF{#1}{
        \mathrm{rank}\,#2
    }{
        \mathrm{rank}\left(#2\right)
    }
}
\NewDocumentCommand{\rSp}{sd()}{
    \IfBooleanTF{#1}{
        \IfNoValueTF{#2}{
            \mathrm{range space}
        }{
            \mathrm{Ran}\left(#2\right)
        }
    }{
        \IfNoValueTF{#2}{
            \mathcal{R}
        }{
            \mathcal{R}\left(#2\right)
        }
    }
}
\NewDocumentCommand{\nr}{O{W}d()}{
    #1\left(#2\right)
}
\NewDocumentCommand{\anr}{O{W}d()}{
    #1'\left(#2\right)
}
\NewDocumentCommand{\maxph}{sO{\phi}d()}{
    \IfNoValueTF{#3}{
        \IfBooleanTF{#1}{
            #2_{\max}
        }{
            \bar{#2}
        }
    }{
        \IfBooleanTF{#1}{
            #2_{\max}\left(#3\right)
        }{
            \overline{#2}\left(#3\right)
        }
    }
}
\NewDocumentCommand{\minph}{sO{\phi}d()}{
    \IfNoValueTF{#3}{
        \IfBooleanTF{#1}{
            #2_\{\min\}
        }{
            \ubar{#2}
        }
    }{
        \IfBooleanTF{#1}{
            #2_\{\min\}\left(#3\right)
        }{
            \underline{#2}\left(#3\right)
        }
    }
}
\NewDocumentCommand{\Tr}{r()}{
    \mathrm{Tr}\,(#1)
}
\NewDocumentCommand{\hermp}{s O{H} r()}{
    \IfBooleanTF{#1}{
        (#3)_{\mathrm{#2}}
    }{
        #3_{\mathrm{#2}}
    }
}
\NewDocumentCommand{\shermp}{s O{S} r()}{
    \IfBooleanTF{#1}{
        \left(#3\right)_{\mathrm{#2}}
    }{
        #3_{\mathrm{#2}}
    }
}
\NewDocumentCommand{\mati}{s o}{
    \IfNoValueTF{#2}{
        \IfBooleanTF{#1}{
            \mathbb{I}
        }{
            I
        }
    }{
        \IfBooleanTF{#1}{
            \mathbb{I}_{#2}
        }{
            I_{#2}
        }
    }
} 
\NewDocumentCommand{\mato}{s}{
    \IfBooleanTF{#1}{
        \mathbf{0}
    }{
        0
    }
} 
\NewDocumentCommand{\toep}{s d()}{
    \IfBooleanTF{#1}{
        \mathbb{T}\,\mathrm{oep}(#2)
    }{
        \mathrm{Toep}\,(#2)
    }
}
\NewDocumentCommand{\inv}{d()}{
    \IfNoValueTF{#1}{
        ^{-1}
    }{
        \left(#1\right)^{-1}
    }
}
\NewDocumentCommand{\tp}{s d()}{
    \IfBooleanTF{#1}{
        \IfNoValueTF{#2}{
            ^{\mathsf{T}}
        }{
            \left(#2\right)^{\mathsf{T}}
        }
    }{
        \IfNoValueTF{#2}{
            ^\mathsf{T}
        }{
            \left(#2\right)^{\mathsf{T}}
        }
    }
}
\NewDocumentCommand{\ct}{s d()}{
    \IfBooleanTF{#1}{
        \IfNoValueTF{#2}{
            ^{\mathsf{H}} 
        }{
            \left(#2\right)^{\mathsf{H}}
        }
    }{
        \IfNoValueTF{#2}{
            ^{\mathsf{H}} 
        }{
            \left(#2\right)^{\mathsf{H}}
        }
    }
}
\NewDocumentCommand{\invct}{s d()}{
    \IfNoValueTF{#2}{
        \IfBooleanTF{#1}{
            ^{-\mathsf{H}}
        }{
            ^{-\mathsf{H}}
        }
    }{
        \IfBooleanTF{#1}{
            \left(#2\right)^{-\mathsf{H}}
        }{
            \left(#2\right)^{-\mathsf{H}}
        }
    }
}
\NewDocumentCommand{\pinv}{s d()}{
    \IfNoValueTF{#2}{
        \IfBooleanTF{#1}{
            ^{+}
        }{
            ^{\dagger}
        }
    }{
        \IfBooleanTF{#1}{
            \left(#2\right)^{+}
        }{
            \left(#2\right)^{\dagger}
        }
    }
}
\NewDocumentCommand{\iprod}{r() r() o}{
    \IfNoValueTF{#3}{
        \left \langle #1, #2 \right \rangle
    }{
        \left \langle #1, #2 \right \rangle_{#3}
    } 
}
\NewDocumentCommand{\cone}{D(){K}o}{
    \IfNoValueTF{#2}{
        \mathbf{#1}
    }{
        \mathcal{#1}#2
    }
}
\NewDocumentCommand{\psdcone}{sod()}{
    \IfBooleanTF{#1}{
        \IfNoValueTF{#2}{
            \IfNoValueTF{#3}{
                \mathcal{S}_{+}
            }{
                \mathcal{S}_{+}\left(#3\right)
            }
        }{
            \mathcal{S}_{+}^{#2}\left(#3\right)
        }
    }{
        \IfNoValueTF{#2}{
            \IfNoValueTF{#3}{
                \mathbf{PSD}
            }{
                \mathbf{PSD}_{#3}
            }
        }{
            \mathbf{PSD}_{#3}^{#2}
        }
    }
}
\NewDocumentCommand{\pdcone}{sod()}{
    \IfBooleanTF{#1}{
        \IfNoValueTF{#2}{
            \IfNoValueTF{#3}{
                \mathcal{S}_{++}
            }{
                \mathcal{S}_{++}\left((#3)\right)
            }
        }{
            \mathcal{S}_{++}^{#2}\left(#3\right)
        }
    }{
        \IfNoValueTF{#2}{
            \IfNoValueTF{#3}{
                \mathbf{PD}
            }{
                \mathbf{PD}_{#3}
            }
        }{
            \mathbf{PD}_{#3}^{#2}
        }
    }
}
\NewDocumentCommand{\ppsdcone}{sod()}{
    \IfBooleanTF{#1}{
        \IfNoValueTF{#2}{
            \IfNoValueTF{#3}{
                \mathcal{PS}_{+}
            }{
                \mathcal{PS}_{+}\left(#3\right)
            }
        }{
            \mathcal{PS}_{+}^{#2}\left(#3\right)
        }
    }{
        \IfNoValueTF{#2}{
            \IfNoValueTF{#3}{
                \mathbf{PPSD}
            }{
                \mathbf{PPSD}_{#3}
            }
        }{
            \mathbf{PPSD}_{#3}^{#2}
        }
    }
}
\NewDocumentCommand{\ppdcone}{sod()}{
    \IfBooleanTF{#1}{
        \IfNoValueTF{#2}{
            \IfNoValueTF{#3}{
                \mathcal{PS}_{++}
            }{
                \mathcal{PS}_{++}\left(#3\right)
            }
        }{
            \mathcal{PS}_{++}^{#2}\left(#3\right)
        }
    }{
        \IfNoValueTF{#2}{
            \IfNoValueTF{#3}{
                \mathbf{PPD}
            }{
                \mathbf{PPD}_{#3}
            }
        }{
            \mathbf{PPD}_{#3}^{#2}
        }
    }
}
\NewDocumentCommand{\pbcone}{s m d()}{
    \IfBooleanTF{#1}{
        \IfNoValueTF{#3}{
            \widetilde{\mathbf{SS}}#2
        }{
            \widetilde{\mathbf{SS}}_{#3}#2
        }
    }{
        \IfNoValueTF{#3}{
            \mathbf{SS}#2
        }{
            \mathbf{SS}_{#3}#2
        }
    }
}
\NewDocumentCommand{\spbcone}{s m d()}{
    \IfBooleanTF{#1}{
        \IfNoValueTF{#3}{
            \widetilde{\mathbf{S}}#2
        }{
            \widetilde{\mathbf{S}}_{#3}#2
        }
    }{
        \IfNoValueTF{#3}{
            \mathbf{S}#2
        }{
            \mathbf{S}_{#3}#2
        }
    }
}
\NewDocumentCommand{\ppbcone}{s m d()}{
    \IfBooleanTF{#1}{
        \IfNoValueTF{#3}{
            \mathbf{P\widetilde{SS}}#2
        }{
            \mathbf{P\widetilde{SS}}_{#3}#2
        }
    }{
        \IfNoValueTF{#3}{
            \mathbf{PSS}#2
        }{
            \mathbf{PSS}_{#3}#2
        }
    }
}
\NewDocumentCommand{\Cone}{s D(){K}}{
    \mathbf{#2}
}
\NewDocumentCommand{\compcone}{s m D(){\mathcal{G}}}{
    \IfBooleanTF{#1}{
        \widetilde{\mathbf{K}}_{#3} #2
    }{
        \mathbf{K}_{#3} #2
    } 
}
\NewDocumentCommand{\dcompcone}{s m D(){\mathcal{G}}}{
    \IfBooleanTF{#1}{
        \widetilde{\mathbf{D}}_{#3} #2
    }{
        \mathbf{D}_{#3} #2
    }
}
\NewDocumentCommand{\dcone}{s o d()}{
    \IfBooleanTF{#1}{
        \IfNoValueTF{#3}{
            ^{\circ}
        }{
            (#3)^{\circ}
        }
    }{
        \IfNoValueTF{#2}{
            \IfNoValueTF{#3}{
                ^{*}
            }{
                \left(#3\right)^{*}
            }
        }{
            \IfNoValueTF{#3}{
                ^{#2}
            }{
                \left(#3\right)^{#2}
            }
        }
    }
}
\NewDocumentCommand{\graph}{s m m}{
    \IfBooleanTF{#1}{
        \mathbb{G}\left(#2, #3\right)
    }{
        \left(#2, #3\right)
    }
}
\NewDocumentCommand{\gorder}{d()O{\nu}}{
    \IfNoValueTF{#1}{
        #2
    }{
        #2\left(#1\right)
    }
}
\NewDocumentCommand{\vset}{s O{V}}{
    \IfBooleanTF{#1}{
        \mathcal{#2}
    }{
        #2
    }
}
\NewDocumentCommand{\eset}{s O{E}}{
    \IfBooleanTF{#1}{
        \mathcal{#2}
    }{
        #2
    }
}
\NewDocumentCommand{\compg}{r()}{
    {#1}^{\mathrm{c}}
}
\NewDocumentCommand{\monoset}{s O{M} d()}{
    \IfBooleanTF{#1}{
        \mathbb{#2}\left(#3\right)
    }{
        \mathcal{#2}_#3
    }
}
\NewDocumentCommand{\matofgraph}{s O{M} D(){\mathrm{G}}}{
    \IfBooleanTF{#1}{
        \mathbb{#2}_{#3}
    }{
        \mathcal{#2}_{#3}
    }
}
\NewDocumentCommand{\cmatofgraph}{s O{C} D(){\mathrm{G}}}{
    \IfBooleanTF{#1}{
        \mathbb{#2}_{#3}^{n\times n}
    }{
        \mathcal{#2}_{#3}
    }
}
\NewDocumentCommand{\hmatofgraph}{s O{H} D(){\mathrm{G}}}{
    \IfBooleanTF{#1}{
        \mathbb{#2}_{#3}
    }{
        \mathcal{#2}_{#3}
    }
}
\NewDocumentCommand{\gph}{sO{G}d()}{\IfBooleanTF{#1}{\tilde{\mathcal{#2}}}{\mathcal{#2}}\IfNoValueTF{#3}{}{(#3)}}
\newcommand{\pogeq}{\succcurlyeq}
\NewDocumentCommand{\compset}{sd()o}{
    \IfBooleanTF{#1}{
        \IfNoValueTF{#2}{
            \mathcal{F}
        }{
            \IfNoValueTF{#3}{
                \mathcal{F}\left(#2\right)
            }{
                \mathcal{F}_{#3}\left(#2\right)
            }
        }
    }{
        \IfNoValueTF{#2}{
            \mathbb{F}
        }{
            \IfNoValueTF{#3}{
                \mathbb{F}\left(#2\right)
            }{
                \mathbb{F}_{#3}\left(#2\right)
            }
        }
    }
}
\newcommand{\setX}{\mathcal{X}}
\newcommand{\setY}{\mathcal{Y}}
\newcommand{\setZ}{\mathcal{Z}}
\NewDocumentCommand{\optimal}{sO{*}d()}{
    \IfNoValueTF{#1}{
        \left(#3\right)^{#2}
    }{
        #3^{#2}
    }
}
\NewDocumentCommand{\submat}{s m d()}{
    \IfBooleanTF{#1}{
        #2_{#3}
    }{
        #2[#3]
    }
}
\NewDocumentCommand{\cenComp}{s O{c} d()}{
    \IfBooleanTF{#1}{
            (#3)_{\mathrm{#2}}
        }{
            #3_{\mathrm{#2}}
        }
}
\Crefname{ALC@unique}{Line}{Lines} 
\crefname{hypothesis}{Hypothesis}{Hypotheses}
\title{Matrix Completion and Decomposition in Phase Bounded Cones
  \thanks{Submitted to the editors(TBD) DATE.
    \funding{This work was partially supported by the Hong Kong Research Grant Council under the project GRF 16201120, and by the Knut and Alice Wallenberg foundation via grant KAW 2018.0349.}
  }
}
\author{
  Ding Zhang\thanks{%
  Department of Electronic and Computer Engineering,
  Hong Kong University of Science and Technology,
  Clear Water Bay,
  Kowloon,
  Hong Kong,
  China (\email{ding.zhang@connect.ust.hk}).
  }%
  \and Axel Ringh\thanks{%
    Department of Mathematical Sciences,
    Chalmers University of Technology and the University of Gothenburg,
    SE-412 96 Gothenburg, Sweden  
    (\email{axelri@chalmers.se}).
  }
  \and Li Qiu\thanks{%
    School of Science and Engineering, 
    The Chinese University of Hong Kong, 
    Shenzhen, Guangdong, China (\email{qiuli@cuhk.edu.cn})
  }
}
\begin{document}

\maketitle

\begin{abstract}
The problem of matrix completion and decomposition in the cone of positive semidefinite (PSD) matrices is a well-understood problem, with many important applications in areas such as linear algebra,  optimization, and control theory. This paper considers the completion and decomposition problems in a broader class of cones, namely phase-bounded cones. We show that most of the main results from the PSD case carry over to the phase-bounded case. More precisely, this is done by first unveiling a duality between the completion and decomposition problems, using a dual cone interpretation. Based on this, we then derive necessary and sufficient conditions for the phase-bounded completion and decomposition problems, and also characterize all phase-bounded completions of a completable partial matrix with a banded pattern.
\end{abstract}

\begin{keywords}
  numerical range, phases, completion, decomposition, chordal graph
\end{keywords}

\begin{MSCcodes}
15A60, 
15A83 
\end{MSCcodes}

\section{Introduction}\vphantom{m}%
Let $\mathbb{M}$ be some ambient set of $n$-by-$n$ square matrices. In the ambient set $\mathbb{M}$, our focus often revolves around some prescribed subset $\Cone$ of matrices possessing a specific property, e.g., positive semidefiniteness, contractivity, total positivity, etc. 
An $n$-by-$n$ \emph{partial} matrix $C$ is a matrix with some of its entries given and others left undetermined. We can associate such a matrix $C$ with a graph $\mathcal{G} = \graph{\vset*}{\eset*}$ that describes the positions of its specified entries. Specifically, we say the partial matrix $C$ has pattern $\mathcal{G}$ if $C_{ij}$ is specified for all $i,j$ such that $\{i,j\}\in \eset*$ and all remaining entries of $C$ are undetermined. A \emph{matrix completion} problem then seeks to determine whether these unspecified entries can be chosen so that the completed matrix falls within the subset $\Cone$ of interest. 
In parallel, for a \emph{full} matrix $C \in \mathbb{M}$, we say it has a \emph{sparsity} pattern $\mathcal{G}$ if $C_{ij} = 0$ for all $i, j$ such that $\{i,j\} \notin \eset*$ while the remaining entries can be either zero or nonzero.  A \emph{matrix decomposition} problem seeks to decompose $C$ into a sum of matrices from $\Cone$, each with a sparsity pattern that is a sub-pattern of $\mathcal{G}$ so that the summands are sparser than the sum $C$.    

\subsection{PSD/PD completion and decomposition} \label{subsec:psd-survey}
In various contexts concerning the specific $\mathbb{M}$ and $\Cone$ of interest, this pair of problems has sparked enduring interests across, but not limited to, the linear algebra, optimization, and control theory communities. This has led to an extensive body of literature that continues to expand. Notably, one of the most prevalent instances is the positive semidefinite (PSD) completion and decomposition problems, where the ambient set $\mathbb{M}$ is the space $\vecSp[H]^n$ of Hermitian matrices (or real symmetric matrices) and $\Cone$ is the cone of (real) positive semidefinite matrices, denoted as $\psdcone$. In this case, the completion and decomposition problems are rather well-understood. 

On the completability side, if a partial matrix with pattern $\mathcal{G}$ can be completed to a positive semidefinite matrix, its specified principal submatrices are necessarily positive semidefinite.
This condition was first shown to be also sufficient when the graph $\mathcal{G}$ is banded \cite{dymExtensionsBandMatrices1981} (which, to be precise, works on the cone $\pdcone$ of positive definite matrices instead of $\psdcone$). The result was later generalized to chordal patterns in \cite{gronePositiveDefiniteCompletions1984}. 
For a $\psdcone$-completable partial matrix $C$ with block banded pattern, a linear fractional transform parameterization of all of its PSD completions is given in \cite{bakonyiCentralMethodPositive1992}.
These understandings naturally carry over to the positive definite cone. Moreover, for a $\pdcone$-completable partial matrix, a central completion which maximizes the determinant over all of its PD completions can be constructed, with an explicit formula derived in \cite{barrettDeterminantalFormulaeMatrix1989}. This central completion is also the unique PD completion whose inverse possesses the sparsity pattern $\mathcal{G}$ \cite{dymExtensionsBandMatrices1981}.
On the numerical front, the PSD completion problem is a special class of semidefinite programming problems, and its computational complexity is dicussed in detail in \cite{laurentPolynomialInstancesPositive2001}. In particular, let $\mathcal{G}=\graph{\vset*}{\eset*}$ be a graph with fixed minimum fill-in, i.e., a fixed minimum number of edges needed to be added to $\eset*$ so as to make $\mathcal{G}$ chordal. It is shown therein that the PSD completion of a partial rational matrix with pattern $\mathcal{G}$ can be solved in polynomial time. Beyond the problems involving chordal patterns, other stronger necessary conditions have been proposed for real symmetric partial matrices to be $\psdcone$-completable \cite{barrettRealPositiveDefinite1993, laurentRealPositiveSemidefinite1997}, where \cite{laurentRealPositiveSemidefinite1997} also shows the sufficiency of these conditions for partial matrices with a series-parallel pattern. 

While the approach to the $\psdcone$-completability in \cite{gronePositiveDefiniteCompletions1984} is constructive, an elegant conic duality links the completion problem to the PSD decomposition problem \cite{aglerPositiveSemidefiniteMatrices1988,paulsenSchurProductsMatrix1989}, offering a new way of proving the completability results in \cite{gronePositiveDefiniteCompletions1984}. Regarding the $\psdcone$-decomposability of a matrix, it was shown in \cite{hershkowitzPositiveSemidefinitePattern1990} for matrices with block tridiagonal sparsity pattern and in \cite{aglerPositiveSemidefiniteMatrices1988} for matrices with chordal sparsity patterns that a sparse Hermitian matrix can be decomposed into a sum of rank one PSD matrices, where the sparsity pattern of each rank one summand aligns with that of the original matrix, if and only if the matrix itself is PSD.  This decomposition has been extensively utilized to decompose large-scale sparse semidefinite problems into smaller ones, which gives rise to efficient distributed and parallel algorithms \cite{sunDecompositionConicOptimization2014,sunDecompositionMethodsSparse2015,zhengChordalFactorwidthDecompositions2021a}.

The PSD/PD completion and decomposition problems not only hold research interest on their own but are also closely related to completion/decomposition with respect to several other matrix properties. By exploiting these connections, results from the PSD/PD setup can be adapted to other types of problems.
A prominent example is the completion to the cone of Euclidean distance matrices (EDM). The EDM cone is the image of $\psdcone$ under a singular linear map (see, for example, \cite[Eqa.~11.2]{vandenbergheChordalGraphsSemidefinite2015}). The explicit connection between EDM and PSD completion problems is exposed in \cite{johnsonConnectionsRealPositive1995,bakonyiEuclidianDistanceMatrix1995}. As a consequence, analogous results hold for EDM completion problem, including the EDM-completability of a partial matrix with chordal pattern \cite{bakonyiEuclidianDistanceMatrix1995} and polynomial-time complexity of the EDM completion for a partial rational matrix whose pattern is described by graphs with fixed minimum fill-in \cite{laurentPolynomialInstancesPositive2001}. Insights from the EDM completion problem have found prolific applications in practical areas that involve distance geometry, such as sensor network localizations, molecular conformation, and phylogenetics \cite{alfakihSolvingEuclideanDistance1999,laurentMatrixCompletionProblems2001,soTheorySemidefiniteProgramming2007}. 
Many useful tools in control theory can be also linked to the PSD results. One example is that the block triangular contractive completion problem can be converted to a PSD completion problem with a banded structure \cite{qiuContractiveCompletionBlock1996,qiuUnitaryDilationApproach2004}. 
Another example is that the useful Parrots' theorem \cite[{Thm.~2.21}]{zhouRobustOptimalControl1995} can be understood from a PSD completion viewpoint. Both examples are due to the equivalence: $\rho\mati - \left[\begin{smallmatrix}\mato & A \\ A\ct* & \mato\end{smallmatrix}\right] \in \psdcone \Leftrightarrow \|A\|_2 \leq \rho$.
A third example is the completion and decomposition of Sum-of-Squares (SOS) matrices, which is of particular interest in system and control theory \cite{chesiDomainAttraction2011}. Authors in \cite{zhengDecompositionCompletionSumofSquares2018,zhengSumofsquaresChordalDecomposition2023} characterize the completability of partial SOS matrices with a chordal pattern and the decomposability of a subset of SOS matrices with a chordal sparsity pattern by lifting SOS matrices to their PSD block matrix representations.

Our review here is in no way complete.
It is only meant to expose some of the results that are most relevant to the context of this paper,
in which we add another member to this family of completion and decomposition problems that are closely tied to the PSD/PD ones.
Readers may consult \cite{laurentTourHorizonPositive1998,bakonyiMatrixCompletionsMoments2011,vandenbergheChordalGraphsSemidefinite2015,zhengChordalFactorwidthDecompositions2021a} and references therein for more comprehensive treatments on PSD/PD completion and decomposition.

\subsection{Problem formulation in phase-bounded cones} \label{subsec:pb-form}
In recent years, the concept of matrix phases has been revitalized in the study of multivariable systems \cite{wangPhasesComplexMatrix2020,chenPhaseTheoryMultiInput2024,maoPhasesDiscretetimeLTI2022}.
The quantitative phase measure plays an equally pivotal role therein as the rather mature gain-based (a.k.a singular value based) analysis and it offers nice physical interpretation when applied to specific physical systems, e.g., the cosine of the largest absolute phase of an impedance matrix corresponds to the worst-case power factor of its associated circuit realization \cite{chenPhasePreservationNport2023}.
 A more detailed exploration of matrix phases will be provided in Section 3, but for now, let us accept that a nonzero square complex matrix of rank $r$ will either have no phase or $r$ phases, and in the latter case they all reside in the closed real interval of length no greater than $\pi$.
In phase-based analysis, the cone of phase bounded matrices, denoted by $\pbcone{[\alpha,\beta]}$ where $\alpha, \beta \in \rF$ serve as lower and upper bounds on matrix phases, naturally arises and becomes the major object of interest. 
This motivates us to consider the completion problem and the decomposition problem in phase bounded cones. 
In this context, the ambient set $\vecSp[M]$ becomes $\mathbb{C}^{n\times n}$ and the property $\Cone$ instantiates as $\pbcone{[\alpha,\beta]}$. Our focus is on the following two questions:
\begin{enumerate}
    \item Given a partial matrix $C$ with pattern $\mathcal{G}$, what conditions on specified entries of $C$ and the graph $\mathcal{G}$ allow us to complete $C$ to a matrix in $\pbcone{[\alpha,\beta]}$?
    \item Given a full matrix $C$ with sparsity pattern $\mathcal{G}$, what conditions must $C$ satisfy to be decomposable as $C_1 + \cdots + C_m$, where each $C_i\in \pbcone{[\alpha,\beta]}$ adheres to a sparser pattern than $\mathcal{G}$ for all $i=1,\ldots,m$?
\end{enumerate} We will provide answers to these questions when $\mathcal{G}$ is chordal, and we will also consider the parameterization of all phase-bounded completions of a $\pbcone{[\alpha,\beta]}$-completable partial matrix.
More specifically, for phase-bounded matrices, we show that under an appropriately defined inner product space, the cones related to the completion and decomposition problems also form two pairs of dual cones.
Using these cones, we can show that the duality between these two problems is preserved in phase bounded cones. We give necessary and sufficient conditions for completion and decomposition problems when the underlying graph is chordal. Finally, we also discuss the characterization of all completions of a completable matrix with a banded pattern.
Note that the PSD cone can be interpreted as the cone $\pbcone{[0]}:=\pbcone{[0,0]}$ of matrices with both its lower and upper phase bounds being zero, and therefore our results can be interpreted as a generalization of PSD results to a more general cone of matrices.
        
\subsection{Outline} \label{subsec:outline}
The remainder of the paper is organized as follows. In \cref{sec:prelim}, we present the preliminaries on graph theory, with a particular focus on chordal graphs, as well as on convex cones. \Cref{sec:phases} presents the matrix phases and establishes some elementary properties of phase-bounded cones. \Cref{sec:comp-dcomp-cond} covers our main results, where we introduce four important cones associated with the phase-bounded matrix completion and decomposition problems. We show the duality among these cones and discuss their relationships. This, in the case of chordal graphs, gives the necessary and sufficient conditions of matrix completion and decomposition problems in phase-bounded cones. 
We also briefly address the phase-bounded completion problem of partial matrices with an asymmetric structure towards the end of the section.
Following this, \cref{sec:characterization} provides a characterization of all phase-bounded completion of a completable partial matrix with a banded pattern.
Finally, we conclude the paper in \cref{sec:concl}.

\section{Preliminaries}
\label{sec:prelim}
In this section, we first set up the general notations used in this paper. Then we build up some preliminary concepts and relevant properties from graph theory and convex analysis.

\subsection{Nomenclature} \label{subsec:notation}
Throughout the text, $\rF[F] = \rF \text{ or } \mathbb{C}$ denotes the field of real or complex numbers.
We refer to the set of positive integers as $\rF[N]_+$.
We denote by $A\tp, A\ct$ the transpose and the conjugate transpose of a matrix $A
\,\in\mathbb{F}^{m\times n}$, respectively. 
Moreover, $\rSp*(A)$ denotes the range of the matrix. 
For two matrices $A_1 \in\mathbb{F}^{m_1\times n_1}$ and $A_2 \in\mathbb{F}^{m_2\times n_2}$, $A = A_1 \oplus A_2$ denotes the direct sum of the matrices, i.e., $A \in \mathbb{F}^{(m_1 + m_2)\times (n_1 + n_2)}$ is the $2 \times 2$ blockdiagonal matrix with $A_1$ and $A_2$ on the diagonal.
For a set $S \subset \vecSp[M]$, the set $A\inv S$ refers to the preimage of $S$ under the linear map $A$, i.e., $A\inv S = \{x\in \mathbb{M} \mid A x \in S\}$.
For a square matrix $A \in \rF[F]^{(n+1)\times (n+1)}$, the diagonal elements $A_{ii}$ compose the \emph{main diagonal}. While the main diagonal is also indexed as the zeroth diagonal, the diagonals below it are indexed from $-1$ to $-n$ sequentially, and the diagonals above it are indexed from $1$ to $n$.
A Toeplitz matrix is a matrix with elements in each of its diagonals  being the same. Hence, we can represent it with its $(2n+1)$ diagonal constants $a_{-n},\ldots,a_{-1},a_{0},a_{1},\ldots,a_{n}\in \mathbb{F}$.
And we shall use $\toep(a_{-n},\ldots,a_{-1},\boxed{a_0},a_{1},\ldots,a_{n})$ to denote a Toeplitz matrix where the box indicates the constant for the zeroth diagonal.
In the space $\vecSp[H]^n$ of $n$-by-$n$ Hermitian matrices, 
we write $H\pogeq \mato$ if $H\,\in \vecSp[H]^n$  falls within $\psdcone$.
For a complex scalar $z\in \mathbb{C}$, we use $\tRe(z)$ and $\tIm(z)$ to refer to its real part and its imaginary part.
For two sets $S_1, S_2$,
        we denote by $S_1+S_2$ the Minkowski sum of $S_1$ and $S_2$, i.e.,
        $S_1+S_2:=\{x+y \mid x\in S_1, y\in S_2\}$. By $\setdiff(S_1)(S_2)$, we mean the set difference, that is $\setdiff(S_1)(S_2):=\{x \in S_1\mid x\notin S_2\}$.
The product $S_1 \times S_2$ of sets $S_1, S_2$ refers to their Catersian product $\{(x,y)\mid x\in S_1, y\in S_2\}$. For convenience, we also use the matrix form $\left[\begin{smallmatrix}x \\ y\end{smallmatrix} \right]$ to represent $(x,y) \in S_1 \times S_2$ if $x,y$ are matrices with the same number of columns.

\subsection{Chordal graphs}\label{subsec:graph}
As mentioned in the introduction, graphs are used to describe the sparsity pattern (or pattern) of (partial) matrices. This section aims to formalize some graph-related concepts and notations. We primarily focus on an undirected graph 
    $\mathcal{G} =\graph{\vset*}{\eset*}$
    where $\vset*$ is the set of vertices and $\eset*\subseteq \carprod{\vset*}{\vset*}$ is the set of edges. The cardinality of the vertex set is denoted as $|\vset*|$. The edge set $\eset*$ of an undirected graph is symmetric. Thus, an unordered pair $\{u,v\}$ is used to denote an undirected edge joining vertices $u$ and $v$. We assume that each vertex has a self-loop, i.e., $\{u,u\}\in \eset*$ for all $u \in \vset*$.
For graph $\mathcal{G}$, we define its complement graph as $\compg(\mathcal{G}):=\graph{\vset*}{\compg(\eset*)}$ where $\compg(\eset*)=\setdiff((\vset*\times \vset*))(\eset*)$.
Let $\vset*[W] \subseteq \vset*$ be a subset of vertices. This set induces a 
    \emph{subgraph $\mathcal{G}(\vset*[W]):=\graph{\vset*[W]}{\eset*(\vset*[W])}$}
        where $\eset*(\vset*[W]) := \left\{
            \{a,b\}\in \eset* \mid a, b \in \vset*[W]
        \right\}$. 
We use $\tilde{\mathcal{G}}(\vset*[W])$ to denote another analogous graph $\graph{\vset*}{\eset*(\vset*[W])}$ but with the vertex set $\vset*$ preserved.
Unless otherwise defined, by putting a graph $\mathcal{G}$ at the subscript of a matrix set $\vecSp[M]$, i.e., $\hmatofgraph*[M](\mathcal{G})$, we are referring to the set of matrices in $\vecSp[M]$ that have sparsity pattern $\mathcal{G}$.

A pivotal class of undirected graphs in our context are \emph{chordal graphs}.
They exhibit nice algebraic and algorithmic properties and play an important role in distributed control and sparse matrix computation. Here we introduce the relevant properties only, and refer readers to \cite{blairIntroductionChordalGraphs1993,vandenbergheChordalGraphsSemidefinite2015} for an expository introduction.
    
For an undirected graph $\mathcal{G} = \graph{\vset*}{\eset*}$, a \emph{simple cycle} of length $m$ refers to a sequence $(v_1,\ldots,v_m,v_1)$ of consecutively adjacent vertices which are distinct except the duplication of the first and the last vertices. If two nonconsecutive vertices $v_i,v_j$, i.e., $ 2\leq |j-i| < m-1$, in the cycle are adjacent in $\mathcal{G}$, then the edge $\{v_i,v_j\}$ is called a \emph{chord} of the cycle. 
Finally, the graph $\mathcal{G}$ is said to be \emph{chordal} if every simple cycle of length no less than 4 has a chord. 

A set $\vset*[K] \subseteq \vset*[V]$ of vertices is called a \emph{clique} 
    if the subgraph induced by $\vset*[K]$, i.e., $\mathcal{G}(\vset*[K])$, is complete. 
    A clique is \emph{maximal} if it is not properly contained in any other clique of the graph $\mathcal{G}$.
For a graph $\mathcal{G}$ with $n$ vertices,
an ordering $\gorder$ of the graph is a bijection from $\{1,\ldots,n\}$ to $\vset*$.
For notational simplicity and clarity, we assume that each graph $\mathcal{G}$ has a default ordering $\nu_0$. Unless other orderings are explicitly mentioned, we shall use indices to refer to vertices ordered by $\nu_0$, as we already did in the introduction.
Hence, the notation $\mathcal{K}$ for a clique can also be interpreted as the index set $\{\nu_0\inv\,(u): u\in \mathcal{K}\}$ of the clique when appropriate.
Under an ordering $\gorder$, 
    we can define the $i$-th \emph{monotone vertex set} $\monoset(i)$ as
        $\monoset(i):= \left\{ \gorder(j) \mid j\geq i \right\}$ where $i=1,\ldots,n$. 
    Clearly, we have $\monoset(1) = \vset*$.  
An ordering $\gorder$ is said to be a \emph{perfect elimination ordering (PEO)}
    if, for each $i \in \{1,\ldots,n\}$,
        the neighbors of $\nu(i)$ in $\mathcal{M}_i$ form a clique.
There are multiple ways to characterize chordality and the following lemma is of technical necessity for later developments.
\begin{lemma}[PEO characterization {\cite[{Thm.~2.3}]{blairIntroductionChordalGraphs1993}}] \label{lem:chordal-peo}
    An undirected graph $\mathcal{G}$ is chordal if and only if it has a perfect elimination ordering.
\end{lemma}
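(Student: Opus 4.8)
The plan is to prove both directions of the equivalence between chordality and the existence of a perfect elimination ordering, treating the easy direction first and then the substantive one.

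For the ``only if'' direction I would argue that a PEO exists by induction on $n = |\mathcal{V}|$. The key structural fact I would invoke (or first establish as a sublemma) is that every chordal graph on at least two vertices has a \emph{simplicial} vertex, i.e., a vertex whose neighborhood induces a clique; moreover, if $\mathcal{G}$ is not complete it has two non-adjacent simplicial vertices. Granting this, pick a simplicial vertex $v$, set $\nu(1) = v$, and observe that $\mathcal{G}(\mathcal{V} \setminus \{v\})$ is again chordal (deleting a vertex cannot create a chordless cycle, since any cycle in the subgraph was already present in $\mathcal{G}$). By the induction hypothesis the smaller graph has a PEO $\nu'$ on $\{2,\ldots,n\}$; concatenating gives an ordering $\nu$ of $\mathcal{G}$, and for each $i$ the neighbors of $\nu(i)$ in $\mathcal{M}_i$ form a clique --- for $i=1$ because $v$ is simplicial, and for $i>1$ because $\nu'$ is a PEO of the induced subgraph and $\mathcal{M}_i$ lies entirely in $\mathcal{V}\setminus\{v\}$. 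The sublemma on existence of a simplicial vertex is itself proved by induction: take a chordless-cycle-free graph, and if it is not complete pick two non-adjacent vertices $a,b$, let $S$ be a minimal $a$--$b$ separator, and apply the induction hypothesis to the components of $\mathcal{G}(\mathcal{V}\setminus S)$ together with $S$; one checks a simplicial vertex of such a component-plus-$S$ graph avoiding $S$ remains simplicial in $\mathcal{G}$.

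For the ``if'' direction, suppose $\mathcal{G}$ has a PEO $\nu$ but, for contradiction, contains a chordless cycle $(v_1,\ldots,v_m,v_1)$ with $m \geq 4$. Among the vertices of this cycle, let $v_k$ be the one appearing earliest in the ordering $\nu$, i.e., minimizing $\nu^{-1}(v_k)$. Its two cycle-neighbors $v_{k-1}$ and $v_{k+1}$ both lie in $\mathcal{M}_{\nu^{-1}(v_k)}$ (they come later in the order) and are both neighbors of $v_k$ in $\mathcal{G}$. Since $\nu$ is a PEO, the neighbors of $v_k$ in that monotone vertex set form a clique, so $\{v_{k-1},v_{k+1}\}$ is an edge of $\mathcal{G}$. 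But $v_{k-1}$ and $v_{k+1}$ are non-consecutive on the cycle (as $m\geq 4$), so this edge is a chord --- contradicting chordlessness. Hence no such cycle exists and $\mathcal{G}$ is chordal.

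The routine content (deletion of a vertex preserves chordality; concatenation of PEOs) is straightforward, and the ``if'' direction is short. The main obstacle is the sublemma underpinning the ``only if'' direction --- that a chordal graph always has a simplicial vertex --- which requires the minimal-separator argument; since the lemma is cited to \cite{blairIntroductionChordalGraphs1993} one may instead simply reference that source for this standard fact and keep the present argument as a self-contained sketch.
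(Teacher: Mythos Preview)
Your proof sketch is correct and follows the standard argument for this classical result (Dirac's simplicial-vertex lemma for the ``only if'' direction, and the earliest-vertex-on-a-chordless-cycle contradiction for the ``if'' direction). However, the paper does not supply its own proof of this lemma at all: it is stated with a citation to \cite[{Thm.~2.3}]{blairIntroductionChordalGraphs1993} and used as a black box. So there is nothing to compare against --- your proposal simply goes further than the paper by sketching the textbook proof rather than deferring to the reference.
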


Let $\vset*[I]\subseteq \{1,\ldots,n\}$ be an index set. An $n$-dimensional vector $x$ is said to be an \emph{$\vset*[I]$-supporting vector} if $x_i = 0$ for all $i\notin \vset*[I]$. Let $\vset*[J]$ be another index set. Then for a square matrix $C$, we use $\submat{C}(\vset*[I],\vset*[J])$ to represent the submatrix of $C$ with row indices $\vset*[I]$ and column indices $\vset*[J]$. When $\vset*[I] = \vset*[J]$, we simplify the notation to $\submat{C}(\vset*[I])$, denoting the principal submatrix indexed by $\vset*[I]$.
For clarity, we assume without loss of generality that all index sets are ordered increasingly. This is due to the fact that the phase property, which will be detailed in \cref{sec:phases}, is invariant under congruence transformations, including the congruence via permutation matrices.
Now, suppose that matrix $C \in \matofgraph*(\mathcal{G})$, and that $\mathcal{K}\subseteq \vset*$ is a clique of $\mathcal{G}$. We then refer to the matrix $C[\mathcal{K}]$ as a \emph{clique supporting matrix} or \emph{$\mathcal{K}$-supporting matrix} to be concrete.

\subsection{Cones and dual cones} \label{subsec:prelim-cones}%
Let $\mathbb{X}$ be a (finite-dimensional) Hilbert space with inner product $\iprod(\cdot)(\cdot)\in \mathbb{F}$, which we refer to as $(\mathbb{X},\iprod(\cdot)(\cdot))$. 
For a subspace $\mathbb{S}$ of $\mathbb{X}$, its orthogonal complement $\mathbb{S}^\perp$ is defined as $\mathbb{S}^\perp :=\left\{x\in \mathbb{X}\mid \iprod(x)(y)=0 \Forall y\in \mathbb{S}\right\}$.  

A nonempty set $S\subseteq \mathbb{X}$ is said to be a cone if it is closed under 
positive scaling.
A cone is \emph{pointed} if it contains no nontrivial subspace. 
It is \emph{fully dimensional} if the interior of the cone is nonempty.

When the ambient space $\mathbb{X}$ is real, the dual cone $S\dcone*$ of $S$ is defined as
    $S\dcone*:= 
        \left\{
            x \in \mathbb{X}\mid \iprod(x)(y)\geq 0 \text{ for all } y\in S
        \right\}$. 
By definition, the dual cone is always closed and convex \cite[{Prop.~6.24}]{bauschkeConvexAnalysisMonotone2017}.
We say a cone $S$ is \emph{self-dual} if $S=S\dcone*$.

Now let us consider a collection of real Hilbert spaces $(\vecSp[X]_i,\iprod(\cdot)(\cdot)[i])$ with $i=1,\ldots,m$. Their product $\vecSp[X]:=\vecSp[X]_1 \times \cdots \times \vecSp[X]_m$ is a vector space under the natural scalar multiplication and addition: $\lambda (x_1,\ldots, x_m) = (\lambda x_1, \ldots, \lambda x_m)$ and $(x_1,\ldots, x_m)+(y_1,\ldots,y_m) = (x_1+y_1,\ldots, x_m+y_m)$ for all $\lambda \in \rF$, $x_i, y_i \in \vecSp[X]_i$ and $i=1,\ldots,m$.
It holds that:
\begin{fact}[Dual cone in product space] \label{clm:dual-product}
    The product space $\vecSp[X]$ is a Hilbert space when equipped with the binary operation $\iprod((x_1,\ldots,x_m))((y_1,\ldots,y_m))[\vecSp[X]]:= \sum_{i=1}^{m}\iprod(x_i)(y_i)[i]$ for all $(x_1,\ldots,x_m)$, $(y_1,\ldots, y_m) \in \vecSp[X]$; Let $\Cone_i$ be a cone in $\vecSp[X]_i$ for $i=1,\ldots,m$, 
        then we have $\dcone*(\Cone_1\times \cdots \times \Cone_m)= \Cone_1\dcone* \times \cdots \times \Cone_m\dcone*$ in $(\vecSp[X],\iprod(\cdot)(\cdot)[\vecSp[X]])$.
\end{fact}
    
A crucial idea in obtaining our results is to exploit the connection between the phase-bounded cone and the well-studied positive semidefinite cone. This, as will be discussed in \cref{sec:phases} later, can be achieved via an invertible linear map.
The following observation reveals that if a cone can be mapped to a self-dual cone in a Hilbert space via an invertible linear map, then we can always endow the space, where the cone resides, with a structure so that the cone itself becomes self-dual.
\begin{lemma}[Self dualization] \label{clm:self-dualization}
    Consider a real vector space $\vecSp[X]$ and a real Hilbert space $(\vecSp[Y],\iprod(\cdot)(\cdot)[\vecSp[Y]])$. 
    Let $T:\vecSp[X]\mapsto \vecSp[Y]$ be an invertible linear map and define accordingly a binary operation $\iprod(x_1)(x_2)[T\vecSp[X]]:= \iprod(Tx_1)(Tx_2)[\vecSp[Y]]$ for all $x_1, x_2 \in \vecSp[X]$, then:
    \begin{enumerate}
        \item The space $\vecSp[X]$ equipped with $\iprod(\cdot)(\cdot)[T\vecSp[X]]$ is a Hilbert space;
        \item For any closed set $Y$ contained in $(\vecSp[Y],\iprod(\cdot)(\cdot)[\vecSp[Y]])$, the set $T\inv Y$ is a closed set in  $(\vecSp[X],\iprod(\cdot)(\cdot)[T\vecSp[X]])$; 
        \item For any cone $\Cone$ contained in $\vecSp[Y]$, we have $\dcone*(T\inv \Cone) = T\inv \Cone\dcone*$. Consequently, if $\Cone$ is self-dual in $(\vecSp[Y],\iprod(\cdot)(\cdot)[\vecSp[Y]])$, then $T\inv \Cone$ is also self-dual in $(\vecSp[X],\iprod(\cdot)(\cdot)[T\vecSp[X]])$.
    \end{enumerate}  
\end{lemma}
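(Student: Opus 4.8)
The three assertions are almost purely formal consequences of the fact that $T$ is an invertible linear map and $\iprod(\cdot)(\cdot)[T\vecSp[X]]$ is the pullback of the inner product on $\vecSp[Y]$; so the strategy is just to push each property through $T$ and $T^{-1}$, being a little careful about which direction we are going. I would organize the proof around the simple observation that $T$ is a \emph{linear isometry} from $(\vecSp[X],\iprod(\cdot)(\cdot)[T\vecSp[X]])$ onto $(\vecSp[Y],\iprod(\cdot)(\cdot)[\vecSp[Y]])$ — once that is noted, everything else is bookkeeping.

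\textbf{Part 1.} First I would check that $\iprod(\cdot)(\cdot)[T\vecSp[X]]$ is genuinely an inner product on $\vecSp[X]$. Bilinearity (or sesquilinearity, but here the spaces are real) and symmetry are immediate from those of $\iprod(\cdot)(\cdot)[\vecSp[Y]]$ together with linearity of $T$. For positive definiteness: $\iprod(x)(x)[T\vecSp[X]] = \iprod(Tx)(Tx)[\vecSp[Y]] \geq 0$, with equality iff $Tx = 0$, which by invertibility of $T$ happens iff $x = 0$. Finite-dimensionality of $\vecSp[X]$ (it is isomorphic to $\vecSp[Y]$ via $T$) gives completeness, so $(\vecSp[X],\iprod(\cdot)(\cdot)[T\vecSp[X]])$ is a Hilbert space.

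\textbf{Part 2.} Here the key remark is that, with the pulled-back inner product, $T$ is norm-preserving: $\|x\|_{T\vecSp[X]} = \|Tx\|_{\vecSp[Y]}$. Hence $T$ and $T^{-1}$ are both continuous (indeed isometric), so $T$ is a homeomorphism. Then $T\inv Y = T^{-1}(Y)$ is the continuous preimage of the closed set $Y$, hence closed in $(\vecSp[X],\iprod(\cdot)(\cdot)[T\vecSp[X]])$. (One could equally argue: if $x_k \to x$ in $\vecSp[X]$ with $x_k \in T\inv Y$, then $Tx_k \to Tx$ in $\vecSp[Y]$ and $Tx_k \in Y$, so $Tx \in Y$, i.e. $x \in T\inv Y$.)

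\textbf{Part 3, the main point.} The claim is $\dcone*(T\inv \Cone) = T\inv \Cone\dcone*$, where the dual on the left is taken in $(\vecSp[X],\iprod(\cdot)(\cdot)[T\vecSp[X]])$ and on the right in $(\vecSp[Y],\iprod(\cdot)(\cdot)[\vecSp[Y]])$. I would prove this by a direct chain of equivalences: for $x \in \vecSp[X]$,
\begin{align*}
x \in \dcone*(T\inv \Cone)
  &\iff \iprod(x)(w)[T\vecSp[X]] \geq 0 \Forall w \in T\inv \Cone \\
  &\iff \iprod(Tx)(Tw)[\vecSp[Y]] \geq 0 \Forall w \text{ with } Tw \in \Cone \\
  &\iff \iprod(Tx)(y)[\vecSp[Y]] \geq 0 \Forall y \in \Cone \\
  &\iff Tx \in \Cone\dcone* \iff x \in T\inv \Cone\dcone*,
\end{align*}
where the crucial third equivalence uses that $T$ is a \emph{bijection}, so as $w$ ranges over $T^{-1}\Cone$, $y = Tw$ ranges over all of $\Cone$, and no test element is lost or gained. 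The consequence for self-duality is then immediate: if $\Cone = \Cone\dcone*$ in $(\vecSp[Y],\iprod(\cdot)(\cdot)[\vecSp[Y]])$, applying the identity just proved gives $\dcone*(T\inv \Cone) = T\inv \Cone\dcone* = T\inv \Cone$, so $T\inv\Cone$ is self-dual in $(\vecSp[X],\iprod(\cdot)(\cdot)[T\vecSp[X]])$. (Note $T\inv\Cone$ is indeed a cone, since $T^{-1}$ is linear and $\Cone$ is closed under positive scaling.)

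\textbf{Where the (minor) difficulty lies.} There is no deep obstacle; the only thing that genuinely requires care is keeping straight in Part 3 that the two dual cones live in two different inner-product spaces, and that the bijectivity of $T$ — not merely linearity — is what makes the quantifier over the cone translate cleanly. Invertibility is also what was used in Part 1 to get positive definiteness and in Part 2 to get $T$ a homeomorphism, so it is worth flagging that all three parts fail without it. Everything else is a routine unwinding of definitions.
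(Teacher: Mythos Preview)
Your proposal is correct and follows essentially the same approach as the paper's proof: both verify the inner-product axioms directly (the paper phrases positive definiteness via $T^{\mathsf{H}}T$ being positive definite, you via injectivity of $T$), both use the isometry $\|x\|_{T\mathbb{X}} = \|Tx\|_{\mathbb{Y}}$ for closedness (the paper writes out the sequence argument you mention parenthetically), and both prove Part~3 by translating the dual-cone condition through $T$, with the paper splitting into two inclusions where you use a single chain of equivalences. Your explicit framing of $T$ as a linear isometry is a nice organizing device, but the underlying arguments coincide.
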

\begin{proof}
    See \cref{pf:self-dualization}.
\end{proof}

\section{From positive semidefinite cone to phase-bounded cones} \label{sec:phases}
Recently, the notion of \emph{matrix phases} was revitalized in \cite{wangPhasesComplexMatrix2020}, which sheds light on the long missing phase part in multivariable system theory \cite{chenPhaseTheoryMultiInput2024, maoPhasesDiscretetimeLTI2022}.
For a square complex matrix, while singular values are well accepted as the matrix gains, the matrix phases are built around the concepts of numerical range and canonical angles \cite{hornEigenvaluesUnitaryPart1959,hornTopicsMatrixAnalysis1994}. 
    
\begin{figure}[tbhp]
    \centering 
    \subfloat[sectorial $C_1$]{\makebox[.21\columnwidth][c]{\includegraphics[height=4.5cm]{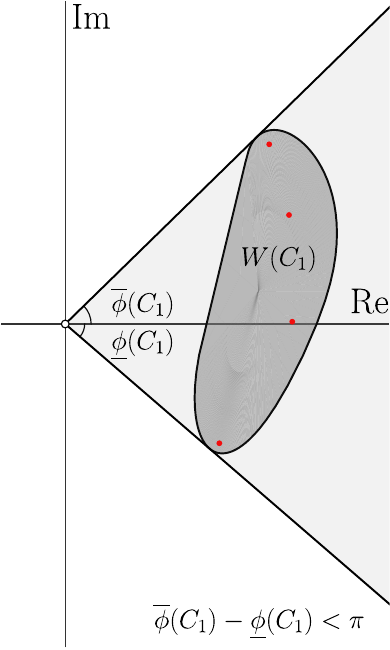}} \label{fig-sect}}
    \subfloat[quasisectorial $C_2$, semisectorial $C_3$]{\makebox[.4\columnwidth]{\includegraphics[height=4.5cm]{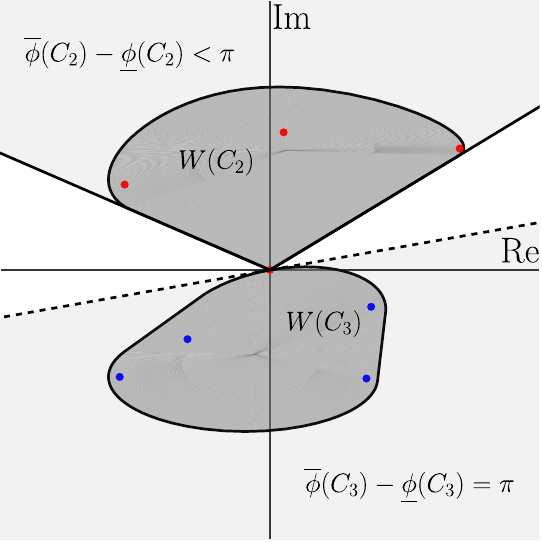}} \label{fig-qs-ss}}
    \subfloat[non-semisectorial $C_4$]{\makebox[.35\columnwidth]{\includegraphics[height=4.5cm]{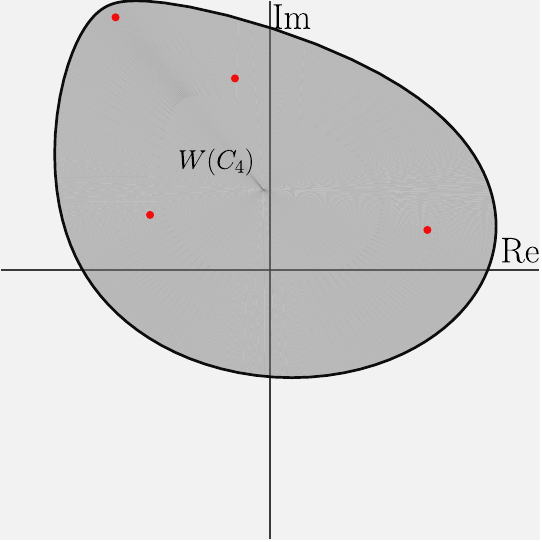}} \label{fig-nss}}
    \caption{Numerical range (shaded in dark gray) and angular numerical range (shaded in gray) of
        four matrices $C_1, C_2, C_3, C_4 \in \matSp{4}{4}$, along with their eigenvalues (dotted).} \label{fig-nr-anr-demo}
\end{figure}

For a square complex matrix $C \in \matSp{n}{n}$,
    its \emph{numerical range} $\nr(C)$ is defined to be the set
    $\left\{ x\ct* C x \in \mathbb{C} \mid x \in \mathbb{C}^n, x\ct* x = 1 \right\}$, which is known to be convex and compact \cite[Sec.~1.2]{hornTopicsMatrixAnalysis1994}. 
The \emph{angular numerical range} 
    $\anr(C):=\left\{ x\ct* C x \in \mathbb{C}\mid x \in \mathbb{C}^n\backslash \{0\}\right\}$ is the smallest cone that contains $\nr(C)$.
Both $\nr(C)$ and $\anr(C)$ serve as 2D graphical representation of matrix $C$. Though the correspondence is in general not one-to-one, one can indeed read off rich information about $C$ from them.
For example, $\nr(C)$ contains all eigenvalues of $C$ and is exactly the convex hull of eigenvalues when $C$ is normal \cite[{Sec.~1.2}]{hornTopicsMatrixAnalysis1994}. 
In particular, matrix $C$ is Hermitian if and only if $\nr(C)$ is a line segment on the real axis.
    A Hermitian $C$ is also normal, and thus the two endpoints of $\nr(C)$ are given by its smallest and largest eigenvalues.
    It follows that the union of the numerical ranges of all matrices in $\psdcone$ is exactly the nonnegative real axis.

A matrix $C$ is said to be \emph{semisectorial} if the origin is not contained in the interior of its numerical range, i.e., $0\notin \sint(\nr(C))$. The angular numerical range of a semisectorial matrix (see \cref{fig-qs-ss}) is a convex cone contained in a closed half-plane, while that of a non-semisectorial matrix is the whole complex plane (see \cref{fig-nss}).
A semisectorial $C$ is called \emph{quasisectorial} if the closure of its angular numerical range, denoted $\cl(\anr(C))$, is pointed. Furthermore, a quasisectorial $C$ is said to be \emph{sectorial} if the origin is excluded from its numerical range, i.e., $0\notin \nr(C)$ (see \cref{fig-sect}). The classes of sectorialness, quasisectorialness, and semisectorialness are successively inclusive.

It is well-known that the matrix eigenvalues are invariant under similiarity transformation. In particular, they are given by the diagonal elements of the similarity canonical form, namely the Jordan form.
Analogously, the matrix phases are the congruence\footnote{%
Throughout this paper, by ``congruence'', we are always referring to the $*\,$-congruence. In other words, a matrix $A$ is congruent to $B$, denoted by $A\simeq B$,  if there exists some nonsingular $T$ such that $A = TBT\ct*$. Congruence defines an equivalence relation.} invariants for semisectorial matrices, and are related to the diagonal elements of the congruence canonical form. The study on this non-trivial form traces back to \cite{furtadoSpectralVariationCongruence2001,furtadoSpectralVariationCongruence2003} for the class of semisectorial matrices, and was later extended to general matrices \cite{hornRegularizationAlgorithmMatrices2006,hornCanonicalFormsComplex2006}.
It can be deduced from \cite[Theorem~5]{furtadoSpectralVariationCongruence2003} that the congruence canonical form of a semisectorial~$C$ is a direct sum of three possible types of canonical blocks:

\begin{align} \label{eqa-congruence-canonical}
    C \simeq 
    \left(
        \mato*_k
    \right)
    \oplus
    \left(
        \bigoplus_{j=1}^{l} e^{\ii* \alpha_j}
    \right)
    \oplus
    \left(
        \bigoplus_{j=1}^{m}
        e^{\ii* \gamma} 
        \begin{bmatrix}
            1 & 2\\
            0 & 1 
        \end{bmatrix}
    \right)
\end{align}
where  $k, l, m\geq 0, n = k + l + 2m$, and $\alpha_1,\ldots,\alpha_l, \gamma \in \rF$.
Note that each block may or may not appear. The arguments $\alpha_1,\ldots,\alpha_l,\gamma$ are all multi-valued but unique modulo $2\pi$.
From \cref{eqa-congruence-canonical}, we observe that 0 is always a semi-simple eigenvalue with multiplicity $k$ for a singular semisectorial matrix of rank $l+2m$.
It is also noteworthy that the numerical range of $e^{\ii*\gamma}[\begin{smallmatrix} 1 & 2 \\ 0 & 1\end{smallmatrix}]$ is a disk centered at $e^{\ii*\gamma}$ with the origin on its boundary \cite{liSimpleProofElliptical1996}. Consequently, its angular numerical range consists of the union of an open half-plane subtended by angles $\gamma-\frac{\pi}{2}, \gamma+\frac{\pi}{2}$ and the origin.

By definition, the angular numerical range $\anr(C)$ is invariant under congruence. Additionally, we have the identity that $\anr(C_1 \oplus C_2)$ equals the convex hull of $\anr(C_1)\cup\anr(C_2)$. Thereby, for a specific semisectorial $C$, though not all arguments $\alpha_1,\ldots,\alpha_l, \gamma\pm\frac{\pi}{2}$ from the general form~\cref{eqa-congruence-canonical} may be present, the values for those present can always be settled in an interval of length less than or equal to $\pi$.
\begin{definition}[Matrix phases] \label{def-matrix-phases}
    Let $C \in \matSp{n}{n}$ be a semisectorial matrix of rank~$r$. 
    In light of form~\cref{eqa-congruence-canonical} 
        and the discussion above, 
            we refer to any choice of values within a $\pi$-interval for
                the $r$ members of $\alpha_1,\ldots,\alpha_l$ and $m$ copies of $\gamma\pm \frac{\pi}{2}$ that appear in the congruence canonical form of $C$ as a set of phases of $C$. 
            For a set of phases of $C$, we arrange them in nonincreasing order and denote them as
            \begin{align*}
                \maxph(C) := \phi_1(C) \geq \cdots \geq \phi_r(C) =: \minph(C)
            \end{align*} where $\minph(C),\maxph(C)$ refer to the smallest and the largest phases of $C$, respectively.
            For the zero matrix, we let $\minph(\mato) = \infty$ and $\maxph(\mato) = -\infty$.
\end{definition}
\begin{remark} \label{rmk-def-1}
    The rays defined by $\maxph(C)$ and $\minph(C)$ are exactly the extreme rays of the angular numerical range $\anr(C)$ as depicted in \cref{fig-nr-anr-demo}, i.e., $\minph(C) \text{ and } \maxph(C)$ completely determine $\cl(\anr(C))$.
    If $m>0$, obviously $\maxph(C)-\minph(C) = \pi$ as $\cl(\anr(C))$ is a closed half-plane. Recall that $\cl(\anr(C))$ of a quasisectorial $C$ is pointed, i.e., $\maxph(C)-\minph(C)<\pi$. This implies that $m$ must be zero for quasisectorial matrices.
\end{remark}

We refer readers to \cite{wangPhasesComplexMatrix2020,WANG2023441,chenPhaseTheoryMultiInput2024} for more detailed discussions on matrix phases.
In this work, we consider the cone of semisectorial matrices with phases bounded by $\alpha \text{ and } \beta$, which is denoted by $\pbcone{[\alpha,\beta]}$:
\begin{align} \label{eqa:pb-cone}
    \pbcone{[\alpha,\beta]} := \left\{
        C \in \matSp{n}{n}\mid
        C \text{ is semisectorial},
        \alpha \leq \minph(C) \leq \maxph(C) \leq \beta
    \right\}.
\end{align}
In the notation $\pbcone{[\alpha,\beta]}$, we have omitted the size of the matrices, which should be clear from its context.
We require that $0\leq \beta-\alpha \leq \pi$ such that $\pbcone{[\alpha,\beta]}$ is convex.
For $\beta-\alpha=0$, the cone reduces to the essentially positive semidefinite cone, i.e., $e^{-\ii*\alpha} \pbcone{[\alpha,\beta]}$ is exactly $\psdcone$. In this case, the cone is not full-dimensional in its ambient space.
For $\beta-\alpha=\pi$, $\pbcone{[\alpha,\beta]}$ becomes the cone of \emph{essentially accretive matrices}%
\footnote{
    A square matrix $C$ is accretive if $C+C\ct*$ is positive semidefinite. It is essentially accretive if there exists $\alpha\in (-\pi,\pi]$ such that $e^{\ii*\alpha} C$ is accretive. Note that there is ambiguity in defining accretive matrices and we align our definition with \cite[{p.~279}]{katoPerturbationTheoryLinear1995}, \cite[{p.~4}]{chenPhaseTheoryMultiInput2024}.
}.
This cone is not pointed as it contains the nontrivial subspace 
    $\{e^{\ii* \alpha}H \mid H \in \vecSp[H]^n\}$. 
Finally, for $0<\beta-\alpha<\pi$, which is the case we are mainly interested, the cone is convex, pointed, and full-dimensional.
It can be readily recognized, from \cref{def-matrix-phases} and remarks thereafter, that the union of numerical ranges of all matrices in $\pbcone{[\alpha,\beta]}$ is exactly the closed convex cone substented by angles $\alpha,\beta$ in the complex plane (see \cref{fig:nr-union-cone}).

\begin{figure}[tbhp]
    \centering 
    \subfloat[PSD cone]{\label{fig:nr-union-psd}
    \makebox[.3\columnwidth]{
        \includegraphics[height=3.8cm]{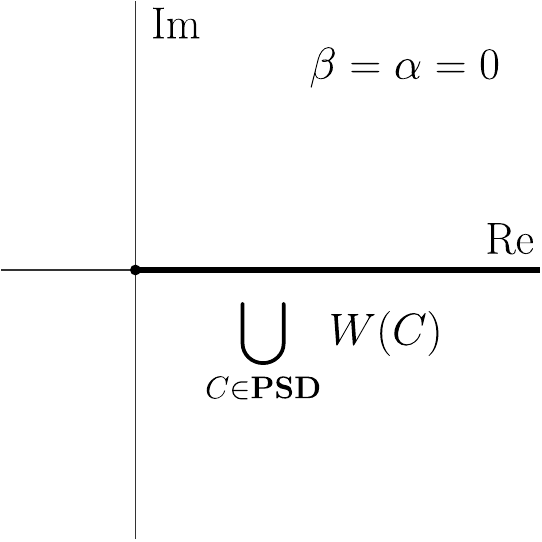}
        }
    } 
    \subfloat[quasisectorial cone]{\label{fig:nr-union-quasi}
        \makebox[.3\columnwidth]{
            \includegraphics[height=3.8cm]{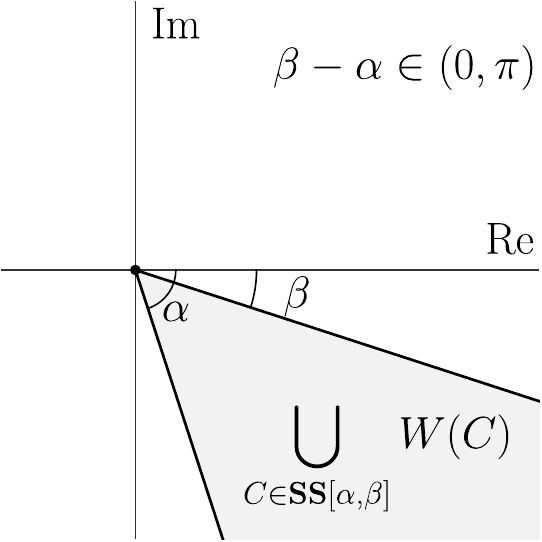}
        }
    } 
    \subfloat[accretive cone]{\label{fig:nr-union-porealsemi}
        \makebox[.3\columnwidth]{
            \includegraphics[height=3.8cm]{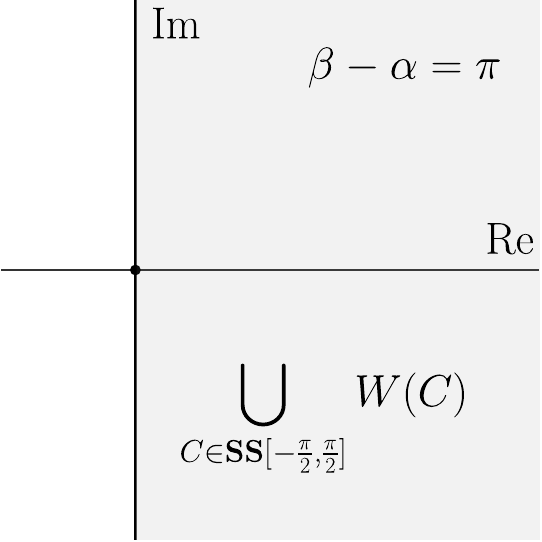}
        }
    }
    \caption{The union of numerical ranges of matrices in phase-bounded cones.} \label{fig:nr-union-cone}
\end{figure}

We will now provide a PSD characterization of phase-bounded cone through the abovesaid graphical intuition and a realification of $\mathbb{C}^{n\times n}$. To this end, first note that, in general, one can either realify $C \in \mathbb{C}^{n\times n}$ via the elementwise Cartesian decomposition $C = \tRe(C)+\ii*\,\tIm(C)$ where $\tRe(C),\tIm(C)\in \rF^{n\times n}$, or via the Toeplitz decomposition $C = \hermp(C)+\ii* \shermp(C)$ where $\hermp(C) = \frac{1}{2}(C+C\ct*), \shermp(C) = \frac{1}{2\ii*}(C-C\ct*)\,\in \vecSp[H]^n$.%
\footnote{
    Note that $\hermp(C)$ is usually referred to as the Hermitian part of $C$ while $\shermp(C)$ is slightly different from the skew-Hermtian part $\frac{1}{2}\left(C-C\ct* \right)$ defined in literature
    (see \cite[{p.~9}]{hornTopicsMatrixAnalysis1994} and \cite{filipequeiroCartesianDecompositionMatrix1985} for example).
    We divide the skew-Hermitian part by $\ii*$ such that $\shermp(C)$ is Hermitian. 
}
Here, we shall adopt the latter due to its clear connection with the numerical range of $C$. To be specific, the orthogonal projections of $\nr(C)$ onto the real and imaginary axes are exactly $\nr(\hermp(C))$ and $\ii*\,\nr(\shermp(C))$ \cite[Thm.~1.2.5]{hornTopicsMatrixAnalysis1994}. 
For a set $S\subseteq \mathbb{C}^{n\times n}$, we will use $\widetilde{S}$ to signify its realification to $\vecSp[H]^n \times \vecSp[H]^n$ through the Toeplitz decomposition, e.g., the realification of $\pbcone{[\alpha,\beta]}$ is

\begin{align*}
    \pbcone*{[\alpha,\beta]} := \left\{\begin{bNiceMatrix}
        H \\ S
    \end{bNiceMatrix} \in \vecSp[H]^n \times \vecSp[H]^n \mid 
        H+\ii* S \in \pbcone{[\alpha,\beta]}
    \right\}.
\end{align*}  

Next, given $\alpha,\beta \in \rF$, let us define a linear map
\begin{align} \label{eqa:mapr}
    \mapr^n := \begin{bNiceMatrix}
        - \sin \alpha & \cos \alpha \\
        \sin \beta & - \cos \beta 
    \end{bNiceMatrix} \otimes \mati_n
\end{align} where $\mati_n$ is the identity matrix of size $n$ and $\otimes$ denotes the Kronecker product. The superscript $n$ which we use to indicate the matrix size will be omitted if no ambiguity arises.
Then the phase-bounded cone $\pbcone{[\alpha,\beta]}$ can be characterized as follows.
\begin{lemma} \label{lem:pbcone-cond} 
    Let $0\,<\,\beta - \alpha \leq \pi$. A matrix $C \in \matSp{n}{n}$ is in $\pbcone{[\alpha,\beta]}$ if and only if
        \begin{align} \label{pb-cond}
            \mapr\, \begin{bNiceMatrix}
                \hermp(C) \\ \shermp(C)
            \end{bNiceMatrix} \in \psdcone \times \psdcone.
        \end{align}
\end{lemma}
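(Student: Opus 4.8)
The plan is to route both directions of the equivalence through a single geometric statement about the numerical range $\nr(C)$. Write $H := \hermp(C)$ and $S := \shermp(C)$, so that $C = H + \ii* S$ with $H,S \in \vecSp[H]^n$ and, for every $x \in \mathbb{C}^n$ with $x\ct* x = 1$,
\[
x\ct* C x \;=\; x\ct* H x + \ii*\, x\ct* S x ,
\]
i.e. the point $x\ct* C x$ of $\nr(C)$ has real part $x\ct* H x$ and imaginary part $x\ct* S x$. The first ingredient I would establish is an elementary planar fact: for $0 < \beta - \alpha \le \pi$ and $z = p + \ii* q \in \mathbb{C}$, the number $z$ lies in the closed cone $\mathcal{C}_{\alpha,\beta}$ subtended by the angles $\alpha,\beta$ if and only if $-p\sin\alpha + q\cos\alpha \ge 0$ and $p\sin\beta - q\cos\beta \ge 0$. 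Writing $z = r e^{\ii*\theta}$ with $r \ge 0$, these two expressions equal $r\sin(\theta-\alpha)$ and $r\sin(\beta-\theta)$, and their simultaneous nonnegativity, together with $\beta - \alpha \le \pi$, is exactly the condition that $\theta$ lie (mod $2\pi$) in the arc $[\alpha,\beta]$, i.e. that $z \in \mathcal{C}_{\alpha,\beta}$; when $\beta - \alpha = \pi$ the two inequalities cut out the same half-plane, which is then $\mathcal{C}_{\alpha,\beta}$.

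Next I would apply this fact pointwise, with $p = x\ct* H x$ and $q = x\ct* S x$ for each unit vector $x$, and use that a Hermitian matrix is positive semidefinite precisely when all of its quadratic forms are nonnegative, to obtain
\[
\nr(C) \subseteq \mathcal{C}_{\alpha,\beta} \iff -\sin\alpha\, H + \cos\alpha\, S \pogeq \mato \ \text{ and }\ \sin\beta\, H - \cos\beta\, S \pogeq \mato .
\]
The right-hand side is exactly condition \cref{pb-cond}, as one sees by unfolding the definition \cref{eqa:mapr} of $\mapr$ on the stacked pair $\bigl[\begin{smallmatrix} H \\ S \end{smallmatrix}\bigr]$ (the Kronecker factor $\mati_n$ merely makes $\mapr$ act blockwise). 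Hence it remains to show that $C \in \pbcone{[\alpha,\beta]}$ if and only if $\nr(C) \subseteq \mathcal{C}_{\alpha,\beta}$.

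For that last equivalence, the case $C = \mato$ is trivial, so take $C \ne \mato$. If $C \in \pbcone{[\alpha,\beta]}$, then $C$ is semisectorial and, by \cref{rmk-def-1}, $\cl(\anr(C))$ is exactly the closed cone subtended by $\minph(C)$ and $\maxph(C)$; since $\alpha \le \minph(C) \le \maxph(C) \le \beta$ and the intervals $[\minph(C),\maxph(C)]$ and $[\alpha,\beta]$ both have length at most $\pi$, that cone sits inside $\mathcal{C}_{\alpha,\beta}$, and $\nr(C) \subseteq \anr(C) \subseteq \cl(\anr(C))$ yields the inclusion. Conversely, if $\nr(C) \subseteq \mathcal{C}_{\alpha,\beta}$, then, because $\beta - \alpha \le \pi$, $\mathcal{C}_{\alpha,\beta}$ lies in a closed half-plane whose boundary passes through the origin; hence $\nr(C)$ does too, so the origin is not interior to $\nr(C)$ and $C$ is semisectorial. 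Moreover $\anr(C)$, being the smallest cone containing $\nr(C)$, lies in $\mathcal{C}_{\alpha,\beta}$, and therefore so does $\cl(\anr(C))$; by \cref{rmk-def-1} the closed cone subtended by $\minph(C)$ and $\maxph(C)$ is then contained in $\mathcal{C}_{\alpha,\beta}$, which forces (for suitable representatives of the phases) $\alpha \le \minph(C) \le \maxph(C) \le \beta$, i.e. $C \in \pbcone{[\alpha,\beta]}$.

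I expect the only real obstacle to be bookkeeping rather than any substantive difficulty: one must track the $2\pi$-ambiguity of arguments and of phases, be careful to apply \cref{rmk-def-1} to the closure $\cl(\anr(C))$ rather than to $\anr(C)$ itself, and dispatch the degenerate configurations $C = \mato$, the \emph{ray-like} case $\minph(C) = \maxph(C)$, and $\beta - \alpha = \pi$ where one of the two linear inequalities is redundant. A useful consistency check along the way: condition \cref{pb-cond} is invariant under $\ast$-congruence $C \mapsto T C T\ct*$ with $T$ nonsingular, since each of $-\sin\alpha\, H + \cos\alpha\, S$ and $\sin\beta\, H - \cos\beta\, S$ is then conjugated by $T$ and hence its positive semidefiniteness is preserved --- matching the congruence invariance of the phases of $C$.
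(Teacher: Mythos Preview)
Your proposal is correct and follows essentially the same route as the paper: both reduce membership in $\pbcone{[\alpha,\beta]}$ to the geometric inclusion $\nr(C)\subseteq\mathcal{C}_{\alpha,\beta}$, then test that inclusion pointwise via the two scalar inequalities $-p\sin\alpha+q\cos\alpha\ge 0$ and $p\sin\beta-q\cos\beta\ge 0$ (the paper phrases these as rotation tests $\tIm(e^{-\ii*\alpha}x\ct*Cx)\ge 0$, $\tIm(e^{-\ii*\beta}x\ct*Cx)\le 0$, which is the same computation), and finally read off the two PSD conditions. The only difference is cosmetic: you spell out the equivalence $C\in\pbcone{[\alpha,\beta]}\Leftrightarrow \nr(C)\subseteq\mathcal{C}_{\alpha,\beta}$ via \cref{rmk-def-1}, whereas the paper simply invokes the discussion around \cref{fig:nr-union-cone}.
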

\begin{proof}
    Denote by $\cone{[\alpha,\beta]}\,\subseteq \mathbb{C}$ the convex cone substented by by angles $\alpha,\beta$. 
    From our previous discussion involving \cref{fig:nr-union-cone}, a matrix $C$ is in $\pbcone{[\alpha,\beta]}$ if and only if $W(C)\subseteq \cone{[\alpha,\beta]}$, where the latter translates, via the definition of numerical range, into a pointwise condition:
        for all unit vectors $x\in \mathbb{C}^n$, the corresponding point $x\ct* Cx$ falls within $\cone{[\alpha,\beta]}$.
    The membership of a point in $\cone{[\alpha,\beta]}$ can be checked via two rotation tests: after rotating $\cone{[\alpha,\beta]}$ and the point together such that the extreme rays defined by $\alpha$ and $\beta$ align with the nonnegative real axis, the rotated point should be above and below the real axis, respectively.
    These tests mathematically translate into
    \begin{align*}
        \tIm(e^{-\ii* \alpha} x\ct*Cx) &= x\ct*\left(-\sin\alpha\,\hermp(C) + \cos\alpha\,\shermp(C) \right) x\geq 0, \\
        \tIm(e^{-\ii* \beta}  x\ct*Cx) &= x\ct*\left(-\sin\beta\,\hermp(C) + \cos\beta\,\shermp(C) \right) x\leq 0
    \end{align*}    
    for all $x\ct* x = 1$. In other words, $C \in \pbcone{[\alpha,\beta]}$ if and only if $-\sin\alpha\,\hermp(C) + \cos\alpha\,\shermp(C)\succcurlyeq\mato$ and $\sin\beta\,\hermp(C) -\cos\beta\,\shermp(C) \succcurlyeq \mato$, which can be written in the compact form \cref{pb-cond}.
\end{proof}

\begin{remark} \label{rmk:pb-cond}
    As we have discussed after \cref{eqa:pb-cone}, when $\beta-\alpha = 0$, $\pbcone{[\alpha,\beta]}$ is an essentially PSD cone.
    In such case, condition \cref{pb-cond} reduces to an equality constraint requiring that $\shermp((e^{-\ii*\alpha} C)) = \mato$ and we can instead consider $e^{-\ii* \alpha}\pbcone{[\alpha,\beta]}$ in the space $\vecSp[H]^n$. 
    The completion and the decomposition problems considered in the next few sections then reduce to the classical PSD completion and decomposition problems. 
    In the other extreme case, where $\beta-\alpha = \pi$, $\pbcone{[\alpha,\beta]}$ is an essentially accretive cone. Note that the two matrix inequalities encapsulated in condition \cref{pb-cond} now coincide, requiring that $\shermp((e^{-\ii*\alpha}C)) \succcurlyeq \mato$ while leaving $\hermp((e^{-\ii*\alpha}C))$ completely free.
    Therefore, the completion and decomposition problems in this case are again equivalent to a classical PSD completion and decomposition problems.
\end{remark}
Henceforth, we will always assume that $0<\beta-\alpha<\pi$. In this case, \cref{lem:pbcone-cond} builds up a bijection between the realified phase-bounded cone $\pbcone*{[\alpha,\beta]}$ and $\psdcone\times \psdcone$.
We summarize our understanding obtained so far in the first three rows of \cref{tab:pb-char}. 
Next, we shall define inner products on these ambient spaces (as indicated in the fourth row) so as to establish the duality between the phase-bounded completion and decomposition problems.

\begin{table}[h]
    \centering
    \renewcommand{\arraystretch}{1.5} 
    \begin{tabular}{@{}p{8em}@{}c@{}c@{}ccc@{}}
        \hline
        Ambient space&    $\mathbb{C}^{n\times n}$ & & $\vecSp[H]^n \times \vecSp[H]^n$ & & $\vecSp[H]^n \times \vecSp[H]^n$   \\
        Element & $C$ & $\xleftrightarrow[\text{decomposition}]{\text{Toeplitz}}$ & $\begin{bNiceArray}{c}
            \hermp(C) \\ \shermp(C)
        \end{bNiceArray}$ & $\xleftrightarrows[\mapr]{\mapr\inv}$ & $\begin{bNiceArray}{c} -\sin\alpha\,\hermp(C) + \cos\alpha\,\shermp(C) \\ \sin\beta\,\hermp(C) -\cos\beta\,\shermp(C) \end{bNiceArray}$ \\
        Phase-bounded set & $\pbcone{[\alpha,\beta]}$ & & $\pbcone*{[\alpha,\beta]}$ & & $\psdcone \times \psdcone$  \\ \hline
        Inner product & $\iprod(\cdot)(\cdot)[\rF]$ & & $\iprod(\cdot)(\cdot)[{\alpha,\beta}]$ & & $\iprod(\cdot)(\cdot)[\vecSp[H]]$ \\ \hline
    \end{tabular}
    \caption{Realification of $\mathbb{C}^{n\times n}$ and structure endowment via inner products.}  \label{tab:pb-char}
\end{table}

\section{Completability, decomposability, and their duality} \label{sec:comp-dcomp-cond}
\vphantom{\,}
Considering $\mathbb{C}^{n\times n}$ as the usual complex vector space, one commonly used structure is endowed by the trace inner product:   $\iprod(A)(B)[\mathrm{Tr}] = \Tr(A\ct* B) \in \mathbb{C}$. While realifying $\mathbb{C}^{n\times n}$ to a real space, we may induce the corresponding real inner product from the trace inner product. 
For example, we can simply treat $\mathbb{C}^{n\times n}$ itself as a real vector space (by restricting scalar multiplication to the reals $\rF$ only), an immediate inner product can be obtained by taking the real part of the trace inner product: $\iprod(A)(B)[\rF]:= \tRe(\iprod(A)(B)[\mathrm{Tr}])$. 
Additionally, note that the trace inner product is also well defined over $\rF^{n\times n} \times \rF^{n\times n}$ or $\vecSp[H]^n \times \vecSp[H]^n$. Nonetheless, one can verify via some algebraic manipulation that
\[
    \iprod(A)(B)[\rF] = 
    \iprod(\begin{bNiceMatrix}\tRe(A)\\\tIm(A)\end{bNiceMatrix})(\begin{bNiceMatrix}\tRe(B)\\\tIm(B)\end{bNiceMatrix})[\mathrm{Tr}] 
    = \iprod(\begin{bNiceMatrix}\hermp(A)\\\shermp(A)\end{bNiceMatrix})(\begin{bNiceMatrix}\hermp(B) \\ \shermp(B)\end{bNiceMatrix})[\mathrm{Tr}].
\]
For this equivalency and the same reason as before, we shall proceed with the realification to $\vecSp[H]^n \times \vecSp[H]^n$ with $\iprod(\cdot)(\cdot)[\vecSp[H]]:= \iprod(\cdot)(\cdot)[\mathrm{Tr}]$. It is well known that $\psdcone$ is self-dual in $\vecSp[H]^n$ with respect to the trace inner product (see \cite[Chp.~13]{vandenbergheChordalGraphsSemidefinite2015} for example). Hence, it follows from the expanded form of $\iprod(\cdot)(\cdot)[\vecSp[H]]$ and \cref{clm:dual-product} that $\psdcone\times \psdcone$ is self-dual in $\vecSp[H]^n \times \vecSp[H]^n$ with respect to $\iprod(\cdot)(\cdot)[\vecSp[H]]$.

Given $\alpha,\beta$ such that $0 < \beta - \alpha < \pi$, define the following binary operation 
\begin{align*}
    \iprod(\begin{bNiceMatrix}
        H_1 \\
        S_1
    \end{bNiceMatrix})(\begin{bNiceMatrix}
        H_2 \\
        S_2
    \end{bNiceMatrix})[\alpha,\beta] := \iprod(\mapr\begin{bNiceMatrix}
        H_1 \\
        S_1
    \end{bNiceMatrix})(\mapr\begin{bNiceMatrix}
        H_2 \\
        S_2
    \end{bNiceMatrix})[\vecSp[H]]\in\rF 
\end{align*}
for all $(H_1,S_1), (H_2, S_2)\in \vecSp[H]^n \times \vecSp[H]^n$. 
By noting that $\mapr$ is invertible when $0<\beta-\alpha<\pi$, we have the following lemma, which is a direct consequence of \cref{clm:self-dualization} and \cref{lem:pbcone-cond}. 
Unless otherwise specified, the ambient realified space is always set to be \emph{the space $\vecSp[H]^n \times \vecSp[H]^n$ equipped with $\iprod(\cdot)(\cdot)[\alpha,\beta]$} hereafter.
\begin{lemma} \label{lem:well-def-ip}
    For given $\alpha,\beta$ where $0<\beta-\alpha<\pi$, the phase-bounded cone $\pbcone*{[\alpha,\beta]}$ is self-dual in the Hilbert space $(\vecSp[H]^n \times \vecSp[H]^n,\iprod(\cdot)(\cdot)[{\alpha,\beta}])$.
\end{lemma}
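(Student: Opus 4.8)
The plan is to recognize the statement as an immediate instantiation of \cref{clm:self-dualization}, with the invertible linear map there taken to be $\mapr$. Concretely, I would set $\vecSp[X] = \vecSp[H]^n \times \vecSp[H]^n$ regarded as a real vector space, let $(\vecSp[Y],\iprod(\cdot)(\cdot)[\vecSp[Y]])$ be $\vecSp[H]^n \times \vecSp[H]^n$ equipped with the trace-type inner product $\iprod(\cdot)(\cdot)[\vecSp[H]]$, and take $T = \mapr$. By the very definition of $\iprod(\cdot)(\cdot)[{\alpha,\beta}]$ given just above the lemma, this is precisely the pulled-back form $\iprod(\cdot)(\cdot)[T\vecSp[X]]$ appearing in \cref{clm:self-dualization}, so once the hypotheses are checked all three conclusions of that lemma apply verbatim.

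I would carry out the verification in the following order. First, confirm that $\mapr$ is invertible under the standing assumption $0<\beta-\alpha<\pi$: since $\mapr^n$ equals $M \otimes \mati_n$ with $M = \left[\begin{smallmatrix}-\sin\alpha & \cos\alpha\\ \sin\beta & -\cos\beta\end{smallmatrix}\right]$, invertibility of $\mapr^n$ is equivalent to $\det M \neq 0$, and $\det M = \sin\alpha\cos\beta - \cos\alpha\sin\beta = -\sin(\beta-\alpha)$, which is nonzero exactly when $\beta-\alpha\notin\pi\mathbb{Z}$ — in particular on the open interval $(0,\pi)$. This already yields, via part~1 of \cref{clm:self-dualization}, that $(\vecSp[H]^n \times \vecSp[H]^n,\iprod(\cdot)(\cdot)[{\alpha,\beta}])$ is a genuine Hilbert space (so $\iprod(\cdot)(\cdot)[{\alpha,\beta}]$ is positive definite, not merely a symmetric bilinear form). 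Second, identify the cone: \cref{lem:pbcone-cond} states that $C\in\pbcone{[\alpha,\beta]}$ iff $\mapr\,[\begin{smallmatrix}\hermp(C)\\\shermp(C)\end{smallmatrix}]\in\psdcone\times\psdcone$, which in the realified picture is exactly
\[
    \pbcone*{[\alpha,\beta]} \;=\; \mapr^{-1}\bigl(\psdcone \times \psdcone\bigr).
\]
Third, recall — as noted in the paragraph preceding the lemma — that $\psdcone$ is self-dual in $\vecSp[H]^n$ under the trace inner product, so by \cref{clm:dual-product} the product $\psdcone\times\psdcone$ is self-dual in $(\vecSp[H]^n \times \vecSp[H]^n,\iprod(\cdot)(\cdot)[\vecSp[H]])$. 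Finally, the last sentence of part~3 of \cref{clm:self-dualization}, applied with $\Cone = \psdcone\times\psdcone$, states precisely that $\mapr^{-1}(\psdcone\times\psdcone) = \pbcone*{[\alpha,\beta]}$ is then self-dual in $(\vecSp[H]^n \times \vecSp[H]^n,\iprod(\cdot)(\cdot)[{\alpha,\beta}])$, which is the claim.

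There is essentially no serious obstacle here: the mathematical content has been front-loaded into \cref{clm:self-dualization} and \cref{lem:pbcone-cond}, and this lemma is the act of plugging one into the other. The only point that genuinely deserves a line of care is the invertibility of $\mapr$, equivalently that $\iprod(\cdot)(\cdot)[{\alpha,\beta}]$ is an honest inner product: the degenerate endpoints $\beta-\alpha\in\{0,\pi\}$ — excluded by hypothesis — are exactly where $M$ drops rank and the pulled-back form becomes degenerate, which is the structural reason those two cases collapse to classical PSD completion/decomposition instead (cf.\ \cref{rmk:pb-cond}). One should also bear in mind that $\iprod(\cdot)(\cdot)[{\alpha,\beta}]$ is \emph{not} the restriction of the trace inner product, so the self-duality asserted here is relative to this new geometry rather than the ambient one.
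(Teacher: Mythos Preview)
Your proposal is correct and is exactly the approach the paper takes: the paper states the lemma as ``a direct consequence of \cref{clm:self-dualization} and \cref{lem:pbcone-cond}'' after noting that $\mapr$ is invertible when $0<\beta-\alpha<\pi$, which is precisely the chain of reasoning you have spelled out in detail.
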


In this context, it is crucial to recognize that the Toeplitz decomposition, the linear map $\mapr$ and its inverse all preserve the sparsity pattern of matrices. In other words, for any undirected graph $\mathcal{G}$, the subspace $\hmatofgraph*(\mathcal{G}) \times \hmatofgraph*(\mathcal{G})$ yields the realified $\cmatofgraph*(\mathcal{G})$, whereas it is also an invariant subspace of the invertible map $\mapr$ and its inverse.
        
We are now ready to define the following four important sets in $\mathbb{C}^{n\times n}$, which encompass the phase-bounded completion and decomposition problems.
\begin{align}
    \pbcone{[\alpha,\beta]}(\mathcal{G}) &:= 
        \cmatofgraph*(\mathcal{G}) \,\cap\, \pbcone{[\alpha,\beta]},   
        \label{cone-pb} \\
    \ppbcone{[\alpha,\beta]}(\mathcal{G}) &:=
        \left\{
            C \in \mathbb{C}^{n\times n}\mid \submat{C}(\mathcal{K})
                \in \pbcone{[\alpha,\beta]}
                \text{ for all cliques }\mathcal{K} 
                \text{ of } \mathcal{G}
        \right\}, 
        \label{cone-ppb} \\
    \compcone{[\alpha,\beta]} &:=
        \left\{
            C \in \mathbb{C}^{n\times n}\mid 
            \exists\, B \in \cmatofgraph*(\compg(\mathcal{G})) 
            \text{ such that }
            C + B \in \pbcone{[\alpha,\beta]}
        \right\}, 
        \label{cone-completable}\\
    \dcompcone{[\alpha,\beta]} &:= 
        \left\{
            C \in \cmatofgraph*(\mathcal{G})\mid  
                \exists\, n_C \in \rF[N](+) \text{ such that }
                    \textstyle  C = \sum_{i=1}^{n_C} C_i  \right.  \label{cone-decomposable} \\
                & \hspace{8em} \left. \vphantom{\textstyle \sum_{i=1}^{n_C}} \text{ where }\rank(C_i) = 1, C_i \in \pbcone{[\alpha,\beta]}(\mathcal{G}) \right\}. \nonumber
\end{align}
The set $\pbcone{[\alpha,\beta]}(\mathcal{G})$ collects phase-bounded matrices with sparsity pattern $\mathcal{G}$. Matrices in $\ppbcone{[\alpha,\beta]}(\mathcal{G})$ have each of its clique supporting matrices being phase-bounded.
The set $\compcone{[\alpha,\beta]}$ can be understood as the \emph{completable set} in the following sense: For any $C \in \compcone{[\alpha,\beta]}$, if we convert it into a partial matrix by letting all $C_{ij}$ where $\{i,j\}\notin \eset*$ free for completion, then the definition of $\compcone{[\alpha,\beta]}$ indicates that this phase-bounded completion problem is always solvable.
Meanwhile, $\dcompcone{[\alpha,\beta]}$ is the \emph{decomposable set} of matrices which admit a rank one decomposition with each summand being a clique supporting phase-bounded matrix. 
\begin{remark} \label{rmk:rank-one}
    Note that the rank-one representation of the sum, as in \cref{cone-decomposable}, follows the style of \cite{bakonyiMatrixCompletionsMoments2011}. This rank constraint can be easily removed. 
    When $\beta-\alpha<\pi$, we can deduce from \cref{rmk-def-1} that any matrix $C\in \pbcone{[\alpha,\beta]}$ of rank $r$ must admit the decomposition $C = T D T\ct* = \sum_{i=1}^r e^{\ii*\phi_i} t_i t_i\ct*$ where $T = [t_1\,\cdots\,t_r] \in \mathbb{C}^{n\times r}$ has full rank, $\phi_i \in [\alpha,\beta]$ for all $i$, and $D = \oplus_{i=1}^{r} e^{\ii*\phi_i}$.
    \begin{align} \label{eqa:clq-form-decompc}
        & \left\{\vphantom{\pbcone{[\alpha,\beta]}(\tilde{\mathcal{G}}(\mathcal{K}_i))}
            C \in \cmatofgraph*(\mathcal{G}) \mid \exists\, n_C \in \rF[N]_+ \text{ such that } \textstyle C = \sum_{i=1}^{n_C} C_i \right. \\ 
        & \hspace{8em} \left. \text{where } C_i \in \textstyle \pbcone{[\alpha,\beta]}(\tilde{\mathcal{G}}(\mathcal{K}_i))\text{ and }\mathcal{K}_i\text{ is some clique of }\mathcal{G}\right\}. \nonumber 
    \end{align}
\end{remark}
 
Through the Toeplitz decomposition, the four sets \cref{cone-pb}-\cref{cone-decomposable} induce their respective realified version: $\pbcone*{[\alpha,\beta]}(\mathcal{G}), \ppbcone*{[\alpha,\beta]}(\mathcal{G}), \compcone*{[\alpha,\beta]}$, and $\dcompcone*{[\alpha,\beta]} \subseteq \vecSp[H]^n \times \vecSp[H]^n$. To elucidate this, for example, $\compcone*{[\alpha,\beta]}(\mathcal{G})$ is just
\begin{align*}
    \left\{\begin{bNiceMatrix}
        H \\ S
    \end{bNiceMatrix} \in \vecSp[H]^n \times \vecSp[H]^n \mid 
        \exists 
        \begin{bNiceMatrix}
            X \\ Y
        \end{bNiceMatrix} \in \hmatofgraph*(\compg(\mathcal{G}))\times \hmatofgraph*(\compg(\mathcal{G})) \text{ such that } (H+\ii* S) + (X+\ii* Y) \in \pbcone{[\alpha,\beta]}
    \right\}.
\end{align*}
We summarize the key properties of these sets in the following theorem.
\begin{theorem} \label{thm:dual-cone}
    Given $\alpha,\beta$ where $ 0 < \beta - \alpha < \pi$, 
        and an undirected graph $\mathcal{G}$, the following statements hold:
    \begin{enumerate}
        \item Sets $\pbcone*{[\alpha,\beta]}(\mathcal{G}), \ppbcone*{[\alpha,\beta]}(\mathcal{G}), \compcone*{[\alpha,\beta]}$, and $\dcompcone*{[\alpha,\beta]}$ are closed pointed convex cones.
        \item The four cones have the following inclusion relationship:
            \begin{align*}
                \dcompcone*{[\alpha,\beta]}(\mathcal{G}) \subseteq
                \pbcone*{[\alpha,\beta]}(\mathcal{G}) \subseteq
                \compcone*{[\alpha,\beta]}(\mathcal{G}) \subseteq 
                \ppbcone*{[\alpha,\beta]}(\mathcal{G}).
            \end{align*}
        \item The four cones form two pairs of dual cones:
            \begin{align*}
                \dcone*(\pbcone*{[\alpha,\beta]}(\mathcal{G})) 
                    = \compcone*{[\alpha,\beta]}(\mathcal{G}),\quad 
                \dcone*(\ppbcone*{[\alpha,\beta]}(\mathcal{G})) 
                    = \dcompcone*{[\alpha,\beta]}.
            \end{align*}
    \end{enumerate}
\end{theorem}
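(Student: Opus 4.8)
\emph{Approach.} I would derive all three parts from a single structural fact — that $\pbcone*{[\alpha,\beta]}$ is a closed, pointed, full-dimensional, \emph{self-dual} cone in $(\vecSp[H]^n\times\vecSp[H]^n,\iprod(\cdot)(\cdot)[\alpha,\beta])$, by \cref{lem:well-def-ip} and the discussion after \cref{eqa:pb-cone} — combined with the elementary conic-duality identities $\dcone*(K_1\cap K_2)=\operatorname{cl}(\dcone*(K_1)+\dcone*(K_2))$ and $\dcone*(K_1+K_2)=\dcone*(K_1)\cap\dcone*(K_2)$ for closed convex cones, where $\dcone*(\mathbb{V})=\mathbb{V}^\perp$ for a subspace $\mathbb{V}$. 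The only genuine work is to check that the sums of cones appearing below are already closed, so that the closures can be dropped and the dual cones come out exactly as stated; that is where I expect the obstacle to lie.

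\emph{Preliminaries to set up.} I would write $\mathbb{W}:=\hmatofgraph*(\mathcal{G})\times\hmatofgraph*(\mathcal{G})$, the realification of $\cmatofgraph*(\mathcal{G})$, and, for each clique $\mathcal{K}$ of $\mathcal{G}$, let $\mathbb{W}_{\mathcal{K}}$ be the realification of $\cmatofgraph*(\tilde{\mathcal{G}}(\mathcal{K}))$ (matrices supported on the $\mathcal{K}\times\mathcal{K}$ block). Since $\mapr$ has the Kronecker form $M\otimes\mati_n$ for a $2\times2$ matrix $M$, it leaves $\mathbb{W}$, $\mathbb{W}_{\mathcal{K}}$ and their trace-orthogonal complements invariant; together with $\iprod(\cdot)(\cdot)[\alpha,\beta]=\iprod(\mapr\,\cdot)(\mapr\,\cdot)[\vecSp[H]]$ this gives $\mathbb{W}^\perp=\widetilde{\cmatofgraph*(\compg(\mathcal{G}))}$ and $\mathbb{W}_{\mathcal{K}}^\perp=\widetilde{\cmatofgraph*(\compg(\tilde{\mathcal{G}}(\mathcal{K})))}$, the complements now taken for $\iprod(\cdot)(\cdot)[\alpha,\beta]$. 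I also need two facts about $\pbcone{[\alpha,\beta]}$: (a) principal submatrices and zero-paddings of phase-bounded matrices are again phase-bounded (taking Hermitian/skew parts, taking principal submatrices, and zero-padding all commute with one another and with $\mapr$, and PSD is preserved under principal submatrices and zero-paddings, by \cref{lem:pbcone-cond}); and (b) a phase-bounded matrix with zero diagonal is $\mato$ (for $\beta-\alpha<\pi$, writing $C=\sum_j e^{\ii*\phi_j}t_jt_j\ct*$ from the quasisectorial congruence form with all $e^{\ii*\phi_j}$ in one open half-plane, $\mathrm{diag}(C)=\mato$ forces every $t_j=0$). From (b), $\pbcone*{[\alpha,\beta]}\cap\mathbb{W}^\perp=\{0\}$ and $\pbcone*{[\alpha,\beta]}\cap\mathbb{W}_{\mathcal{K}}^\perp=\{0\}$, so by the standard rule that $K+\mathbb{V}$ is closed whenever $K$ is a closed convex cone, $\mathbb{V}$ a subspace, and $K\cap\mathbb{V}=\{0\}$, the sums $\pbcone*{[\alpha,\beta]}+\mathbb{W}^\perp$ and $\pbcone*{[\alpha,\beta]}+\mathbb{W}_{\mathcal{K}}^\perp$ are closed. (Equivalently, transport through $\mapr$: these become the classical closed sets of the form ``$\psdcone\times\psdcone$ plus a pattern-constraint subspace''.)

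\emph{Items (1)--(2).} I would first record the identities $\pbcone*{[\alpha,\beta]}(\mathcal{G})=\pbcone*{[\alpha,\beta]}\cap\mathbb{W}$; $\compcone*{[\alpha,\beta]}(\mathcal{G})=\pbcone*{[\alpha,\beta]}+\mathbb{W}^\perp$ (directly from \cref{cone-completable}); $\ppbcone*{[\alpha,\beta]}(\mathcal{G})=\bigcap_{\mathcal{K}}(\pbcone*{[\alpha,\beta]}+\mathbb{W}_{\mathcal{K}}^\perp)$, where ``$\supseteq$'' uses zero-padding a clique block and ``$\subseteq$'' uses restricting a completion to that block (and intersecting over maximal cliques suffices, by (a)); and $\dcompcone*{[\alpha,\beta]}=\sum_{\mathcal{K}}\pbcone*{[\alpha,\beta]}(\tilde{\mathcal{G}}(\mathcal{K}))$ (by \cref{rmk:rank-one}, after grouping the rank-one summands by clique). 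Each of these is visibly a convex cone; closedness of the first three follows from the preceding paragraph, while $\dcompcone*{[\alpha,\beta]}$ is a finite sum of closed convex cones all lying inside the pointed closed cone $\pbcone*{[\alpha,\beta]}$, hence closed (if $\sum_{\mathcal{K}}y_{\mathcal{K}}=0$ with $y_{\mathcal{K}}\in\pbcone*{[\alpha,\beta]}$, pairing with an $\ell\in\mathrm{int}(\pbcone*{[\alpha,\beta]})$ — strictly positive on $\pbcone*{[\alpha,\beta]}\setminus\{0\}$ by self-duality and pointedness — forces each $y_{\mathcal{K}}=0$). Pointedness of $\pbcone*{[\alpha,\beta]}(\mathcal{G})$ and $\dcompcone*{[\alpha,\beta]}$ is inherited from $\pbcone*{[\alpha,\beta]}$; for $\compcone*{[\alpha,\beta]}(\mathcal{G})$ and $\ppbcone*{[\alpha,\beta]}(\mathcal{G})$ one reads ``pointed'' inside $\mathbb{W}$, i.e.\ after discarding the subspace $\mathbb{W}^\perp$ of entries off $\mathcal{G}$, which carry no constraint. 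The inclusions in (2) are then immediate: $\dcompcone*{[\alpha,\beta]}\subseteq\pbcone*{[\alpha,\beta]}(\mathcal{G})$ since $\pbcone{[\alpha,\beta]}$ is closed under addition and under passage to sub-patterns; $\pbcone*{[\alpha,\beta]}(\mathcal{G})\subseteq\compcone*{[\alpha,\beta]}(\mathcal{G})$ by taking $B=\mato$ in \cref{cone-completable}; and $\compcone*{[\alpha,\beta]}(\mathcal{G})\subseteq\ppbcone*{[\alpha,\beta]}(\mathcal{G})$ because, for $C+B\in\pbcone{[\alpha,\beta]}$ with $B$ supported on $\compg(\mathcal{G})$, one has $(C+B)[\mathcal{K}]=C[\mathcal{K}]$ on every clique block, which is phase-bounded by (a).

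\emph{Item (3): the dualities.} For the first pair, self-duality of $\pbcone*{[\alpha,\beta]}$ and $\dcone*(\mathbb{W}^\perp)=\mathbb{W}$ give
\begin{align*}
\dcone*(\pbcone*{[\alpha,\beta]}(\mathcal{G}))
&=\dcone*(\pbcone*{[\alpha,\beta]}\cap\mathbb{W})
=\operatorname{cl}(\pbcone*{[\alpha,\beta]}+\mathbb{W}^\perp)\\
&=\pbcone*{[\alpha,\beta]}+\mathbb{W}^\perp
=\compcone*{[\alpha,\beta]}(\mathcal{G}),
\end{align*}
the closure being vacuous by the closedness above. For the second, dualizing $\ppbcone*{[\alpha,\beta]}(\mathcal{G})=\bigcap_{\mathcal{K}}(\pbcone*{[\alpha,\beta]}+\mathbb{W}_{\mathcal{K}}^\perp)$ term by term,
\begin{align*}
\dcone*(\ppbcone*{[\alpha,\beta]}(\mathcal{G}))
&=\operatorname{cl}\Bigl(\textstyle\sum_{\mathcal{K}}\dcone*(\pbcone*{[\alpha,\beta]}+\mathbb{W}_{\mathcal{K}}^\perp)\Bigr)
=\operatorname{cl}\Bigl(\textstyle\sum_{\mathcal{K}}(\pbcone*{[\alpha,\beta]}\cap\mathbb{W}_{\mathcal{K}})\Bigr)\\
&=\textstyle\sum_{\mathcal{K}}\pbcone*{[\alpha,\beta]}(\tilde{\mathcal{G}}(\mathcal{K}))
=\dcompcone*{[\alpha,\beta]},
\end{align*}
again dropping the closure since that sum was already shown closed. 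The step I expect to cost the most care is precisely this closedness accounting — the vanishing $\pbcone*{[\alpha,\beta]}\cap\mathbb{W}_{\mathcal{K}}^\perp=\{0\}$ via (b), and the ``sum-inside-a-pointed-cone'' closedness of $\dcompcone*{[\alpha,\beta]}$ — since everything else is a transparent pairing of the self-duality of \cref{lem:well-def-ip} with textbook conic duality. A slightly heavier alternative for all of these closedness claims is to transport each identity through $\mapr$ and invoke the corresponding classical PSD completion/decomposition facts.
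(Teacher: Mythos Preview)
Your argument is correct and takes a genuinely different route from the paper's.

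The paper's proof is entirely \emph{reductive}: it pushes each of the four realified cones through the invertible, sparsity-preserving map $\mapr$ and shows that the image is the Cartesian square of the corresponding classical PSD object (e.g., $\mapr\,\compcone*{[\alpha,\beta]}=\compcone{[0]}\times\compcone{[0]}$); statements (1)--(3) are then read off from the known PSD results \cite[Prop.~1.2.1]{bakonyiMatrixCompletionsMoments2011} via \cref{clm:dual-product,clm:self-dualization}. No closedness bookkeeping is done directly---it is all inherited from the PSD case.

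Your proof is \emph{intrinsic}: you stay in $(\vecSp[H]^n\times\vecSp[H]^n,\iprod(\cdot)(\cdot)[\alpha,\beta])$, use only the self-duality of $\pbcone*{[\alpha,\beta]}$ from \cref{lem:well-def-ip}, and derive both dualities by the textbook identities $(\Cone\cap\mathbb{V})^{\circ}=\operatorname{cl}(\Cone^{\circ}+\mathbb{V}^\perp)$ and $(\bigcap_i K_i)^{\circ}=\operatorname{cl}(\sum_i K_i^{\circ})$, discharging the closures via the two clean facts you isolate: $\pbcone*{[\alpha,\beta]}\cap\mathbb{W}_{\mathcal{K}}^\perp=\{0\}$ (zero-diagonal $\Rightarrow$ zero, your point (b)) and the ``sum inside a pointed full-dimensional self-dual cone'' argument for $\dcompcone*{[\alpha,\beta]}$. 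This buys you a self-contained proof that never leaves the phase-bounded setting and makes transparent exactly which structural properties of $\pbcone*{[\alpha,\beta]}$ are doing the work; the paper's route buys brevity by outsourcing everything to the PSD literature. You even note the paper's approach as your ``heavier alternative'' in the last sentence.

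One point where you are in fact more careful than the paper: you correctly observe that $\compcone*{[\alpha,\beta]}(\mathcal{G})$ and $\ppbcone*{[\alpha,\beta]}(\mathcal{G})$ contain the nontrivial subspace $\mathbb{W}^\perp$, so ``pointed'' can only be meant relative to $\mathbb{W}$ (equivalently, after projecting onto the specified pattern). The paper's proof glosses over this.
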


\begin{proof}
    The proof relies on the well-established PSD results. Instead of introducing a new set of notations, note that when $\alpha = \beta =0$, sets \cref{eqa:pb-cone} and \cref{cone-pb} to \cref{cone-decomposable}, abbreviated as $\pbcone{[0]}, \pbcone{[0]}(\mathcal{G}),\ppbcone{[0]}(\mathcal{G}),\compcone{[0]}$ and $\dcompcone{[0]}$, are exactly their PSD counterparts. In this case, all these sets are already constrained in the real space $\vecSp[H]^n$ and thus we do not need to realify these sets. It is well known that \cref{thm:dual-cone} holds for these sets if we consider the ambient space as $(\vecSp[H]^n,\iprod(\cdot)(\cdot)[\vecSp[H]^n])$ \cite[Prop.~1.2.1]{bakonyiMatrixCompletionsMoments2011}. 

    Recalling the pattern-preserving property of $\mapr$, we now show that the sets in statement 1 can be mapped, by $\mapr$, to their PSD counterparts. To begin with, we have shown in \cref{lem:pbcone-cond} that $\mapr\, \pbcone*{[\alpha,\beta]} = \psdcone \times \psdcone = \pbcone{[0]} \times \pbcone{[0]}$.
    Note that $\pbcone*{[\alpha,\beta]}(\mathcal{G}) = \pbcone*{[\alpha,\beta]} \cap (\hmatofgraph*(\mathcal{G}) \times \hmatofgraph*(\mathcal{G}))$, then 
    \begin{alignat}{2}
        \mapr\, \pbcone*{[\alpha,\beta]}(\mathcal{G}) 
        &= \mapr\, ( \pbcone*{[\alpha,\beta]} \cap (\hmatofgraph*(\mathcal{G}) \times \hmatofgraph*(\mathcal{G})) ) && \label{eqa:pbcone-gph} \\
        &= \mapr\, \pbcone*{[\alpha,\beta]}  \cap \mapr\, (\hmatofgraph*(\mathcal{G}) \times \hmatofgraph*(\mathcal{G})) &&\nonumber\\
        &=  (\pbcone{[0]} \times \pbcone{[0]}) \cap  (\hmatofgraph*(\mathcal{G}) \times \hmatofgraph*(\mathcal{G})) 
            &&=  \pbcone{[0]}(\mathcal{G}) \times \pbcone{[0]}(\mathcal{G}) \nonumber
    \end{alignat} where the second equality holds since $R_{\alpha,\beta}$ is invertible.
    A graph $\mathcal{G}$ with a finite vertex set can only have finite number of maximal cliques, which we denote by $\mathcal{K}_1,\ldots,\mathcal{K}_m$.
    The partially phase-bounded set can be rewritten as the intersection of a finite collection of clique-defined sets, and it follows that
    \begin{align}
        \mapr\, \ppbcone*{[\alpha,\beta]}(\mathcal{G}) &= \mapr\, \bigcap_{i=1}^m (\pbcone*{[\alpha,\beta]}(\tilde{\mathcal{G}}(\mathcal{K}_i))+\hmatofgraph*(\compg(\tilde{\mathcal{G}}(\mathcal{K}_i)))\times \hmatofgraph*(\compg(\tilde{\mathcal{G}}(\mathcal{K}_i)))) \label{eqa:ppbcone-gph} \\
        &= \bigcap_{i=1}^m (\mapr\, \pbcone*{[\alpha,\beta]}(\tilde{\mathcal{G}}(\mathcal{K}_i)) + \mapr\, (\hmatofgraph*(\compg(\tilde{\mathcal{G}}(\mathcal{K}_i)))\times \hmatofgraph*(\compg(\tilde{\mathcal{G}}(\mathcal{K}_i))) )) \nonumber\\
        &= \bigcap_{i=1}^m (\pbcone{[0]}(\tilde{\mathcal{G}}(\mathcal{K}_i)) \times \pbcone{[0]}(\tilde{\mathcal{G}}(\mathcal{K}_i)) + \hmatofgraph*(\compg(\tilde{\mathcal{G}}(\mathcal{K}_i)))\times \hmatofgraph*(\compg(\tilde{\mathcal{G}}(\mathcal{K}_i))) ) \nonumber\\
        &= \ppbcone{[0]}(\mathcal{G}) \times \ppbcone{[0]}(\mathcal{G}). \nonumber
    \end{align}
    Moving on to the completable set, by its definition, it equals to $\pbcone*{[\alpha,\beta]} + \hmatofgraph*(\compg(\mathcal{G}))\times \hmatofgraph*(\compg(\mathcal{G}))$. Hence, it follows that
    \begin{alignat}{2}
        \mapr\,\compcone*{[\alpha,\beta]}(\mathcal{G}) 
        &= \mapr\,(\pbcone*{[\alpha,\beta]} + \hmatofgraph*(\compg(\mathcal{G}))\times \hmatofgraph*(\compg(\mathcal{G}))) && \label{eqa:comp-gph}\\
        &= \mapr\,\pbcone*{[\alpha,\beta]} + \mapr\,(\hmatofgraph*(\compg(\mathcal{G}))\times \hmatofgraph*(\compg(\mathcal{G}))) && \nonumber\\ 
        &= \pbcone{[0]} \times \pbcone{[0]} + \hmatofgraph*(\compg(\mathcal{G}))\times \hmatofgraph*(\compg(\mathcal{G}))
         &&= \compcone{[0]} \times \compcone{[0]}. \nonumber
    \end{alignat}
    Finally, for the decomposable set, \cref{rmk-def-1} implies that we can rewrite $\dcompcone*{[\alpha,\beta]}$ as a sum of phase-bounded matrix sets defined by maximal cliques, each obtained by adding $\pbcone*{[\alpha,\beta]}$ of compatible size to the corresponding maximal-clique supporting matrix of a tall zero matrix. Hence we have
    \begin{align}
        \mapr\,\dcompcone*{[\alpha,\beta]} &= \mapr\,\sum_{i=1}^{m}\pbcone*{[\alpha,\beta]}(\tilde{\mathcal{G}}(\mathcal{K}_i)) \label{eqa:decomp-gph}\\
        &= \sum_{i=1}^m \mapr\,\pbcone*{[\alpha,\beta]}(\tilde{\mathcal{G}}(\mathcal{K}_i)) = \sum_{i=1}^m (\pbcone{[0]}(\tilde{\mathcal{G}}(\mathcal{K}_i)) \times \pbcone{[0]}(\tilde{\mathcal{G}}(\mathcal{K}_i))) \nonumber \\
        &= \sum_{i=1}^m \pbcone{[0]}(\tilde{\mathcal{G}}(\mathcal{K}_i)) \times \sum_{i=1}^m \pbcone{[0]}(\tilde{\mathcal{G}}(\mathcal{K}_i)) = \dcompcone{[0]}\times\dcompcone{[0]}. \nonumber
    \end{align} 

    With relations \cref{eqa:pbcone-gph} to \cref{eqa:decomp-gph}, the proof of the theorem can be completed by appealing to the corresponding result for the PSD counterpart. 
    For the first statement, note that the convexity, conic property, and the closedness of $\pbcone{[0]}(\mathcal{G}),\ppbcone{[0]}(\mathcal{G}),\compcone{[0]}$ and $\dcompcone{[0]}$ are preserved under the Cartesian product. Furthermore, the linear map $\mapr\inv$ preserves the convexity and the conic property in general. 
    In this particular case, the closedness is also preserved by $\mapr\inv$ due to \cref{clm:self-dualization}. 
        Hence, $\pbcone*{[\alpha,\beta]}(\mathcal{G})$, $\ppbcone*{[\alpha,\beta]}(\mathcal{G})$, $\compcone*{[\alpha,\beta]}$ and $\dcompcone*{[\alpha,\beta]}$ are all closed convex cones.
    Finally, provided that $0<\beta-\alpha<\pi$, $C \in \pbcone{[\alpha,\beta]}\setminus \{0\}$ implies $-C \notin \pbcone{[\alpha,\beta]}$. Then it is straightforward to check that these realified cones are pointed. 
    
    The second statement follows easily from the inclusion relationship from the PSD case: $\dcompcone{[0]}(\mathcal{G}) \subseteq
    \pbcone{[0]}(\mathcal{G}) \subseteq
    \compcone{[0]}(\mathcal{G}) \subseteq 
    \ppbcone{[0]}(\mathcal{G})$.
    By invoking \cref{clm:dual-product} and \cref{clm:self-dualization}, the statement 3 also follows from its PSD counterpart. To illustrate this, we consider the completable cone for example:
    \begin{align*}
        \dcone*(\compcone*{[\alpha,\beta]}(\mathcal{G})) &= \dcone*(\mapr\inv\, (\compcone{[0]} \times \compcone{[0]})) = \mapr\inv \dcone*(\compcone{[0]} \times \compcone{[0]})\\
        &= \mapr\inv \dcone*(\compcone{[0]}) \times \dcone*(\compcone{[0]}) = \mapr\inv\, (\ppbcone{[0]}(\mathcal{G}) \times \ppbcone{[0]}(\mathcal{G})) = \ppbcone*{[\alpha,\beta]}(\mathcal{G}).
    \end{align*}
    The proof is now complete.
\end{proof}

In previous studies of completion with respect to 
the positive definite cone \cite{gronePositiveDefiniteCompletions1984},
the cone of Euclidean distance matrices \cite{bakonyiEuclidianDistanceMatrix1995},
and the numerical range completion property \cite{hadwinNumericalRangesMatrix2000}, 
a necessary condition for a completion to exists is, in all cases, that all specified principal submatrices exhibiting their according properties. However, the converse is not true in general, i.e., it might not be possible to complete a partial matrix  to a desirable full matrix even if all specified principal submatrices fulfill the desirable property. This is illustrated by the following example, involving both the positive semidefinite and the phase-bounded properties.

\begin{example} \label{exp:unique-comp}
    Let $\mathcal{G} = \graph{\vset*}{\eset*}$ be an undirected graph with $n = |\vset*|\geq 4$. 
    Suppose that $\mathcal{G}$ is a path graph (see \cref{fig:path-graph} for example). 
    For $\alpha,\beta$ such that $0<\beta-\alpha<\pi$, we choose an arbitrary $\gamma \in [\alpha,\beta]$.
    Now consider the following two partial matrices $H_\theta$ and $C_\gamma$  with pattern $\mathcal{G}$:
        \begin{align*}
            H_\theta &= \toep(?,\ldots,?,e^{-\ii*\theta}, \boxed{1}\,, e^{\ii* \theta}, ?,\ldots,?),\\
            C_\gamma &=  
                \toep(?,\ldots,?,1,\boxed{e^{\ii* \gamma}}\,,e^{\ii* 2\gamma},?,\ldots,?).
        \end{align*}
    Matrix $H_\theta$ admits a unique PSD completion $K_\theta$ while $C_\gamma$ admits a unique completion $F_\gamma$ in $\pbcone{[\alpha,\beta]}$:
    \begin{align*}
        K_\theta &= 
        \toep(e^{-\ii* (n-1)\theta},\ldots,e^{-\ii*\theta},\boxed{1}\,,e^{\ii*\theta},\ldots,e^{\ii*(n-1)\theta}), \\
        F_\gamma &= 
        \toep(e^{-\ii*(n-2)\gamma},\ldots,1,\boxed{e^{\ii*\gamma}}\,,e^{\ii*2\gamma},\ldots,e^{\ii* n \gamma}). 
    \end{align*}
    Now we assume $\mathcal{G}$ is a chordless cycle instead (see \cref{fig:chordless-cycle}). 
    Consider partial matrices:
        \begin{align*}
            \tilde{H}_\theta &= 
                \toep(\bar{h},?,\ldots,?,e^{-\ii*\theta},\boxed{1}\,,e^{\ii*\theta},?,\ldots,?,h),\\
            \tilde{C}_\gamma &= 
                \toep(y,?,\ldots,?,1,\boxed{e^{\ii*\gamma}}\,,e^{\ii*2\gamma},?,\ldots,?,x).
        \end{align*}
    As a consequence of the aforementioned path graph case, it can be deduced that $\tilde{H}_\theta$ can be completed to a PSD matrix if and only if $h=e^{\ii*(n-1)\theta}$ while $\tilde{C}_\gamma$ can be completed to a matrix in $\pbcone{[\alpha,\beta]}$ if and only if $x=e^{\ii* n\gamma} \text{ and } y = e^{-\ii* (n-2)\gamma}$. 
    In other words, if we pick $h$ from $\left\{e^{\ii* \phi}\mid e^{\ii*\phi}\neq e^{\ii*(n-1)\theta}\right\}$, the corresponding $\tilde{H}_\theta$ is not PSD completable despite that each of its specified principal submatrices is PSD. Similarly, if we choose the values of $x$ and $y$ such that
        $\toep(y,\boxed{e^{\ii*\gamma}},x) \in \pbcone{[\alpha,\beta]}$
    while $x\neq e^{\ii* n\gamma}$ and $y\neq e^{-\ii*(n-2)\gamma}$ (one can verify that such pair of $x,y$ does exist and a trivial example is $x=y=0$), 
    the resulting partial matrix $\tilde{C}_\gamma$ is not completable to $\pbcone{[\alpha,\beta]}$, though each of its specified principal submatrices falls inside $\pbcone{[\alpha,\beta]}$.
\end{example}

\begin{figure}
    \centering
    \subfloat[sparsity pattern of $C_\gamma$: a path graph]{
        \begin{minipage}[c]{.47\textwidth}
            \centering
            \includegraphics[width=.8\textwidth,trim={0.1in 0.2in 0.1in 0.2in},clip]{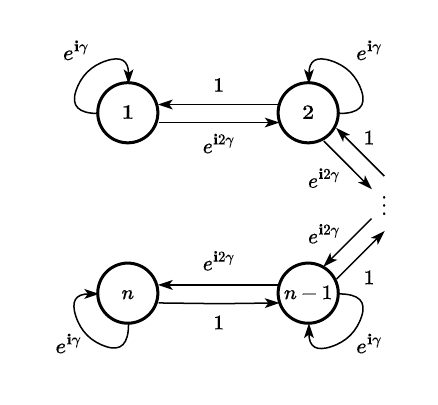} 
        \end{minipage}
        \label{fig:path-graph}
    }
    \subfloat[sparsity pattern of $\tilde{C}_\gamma$: a chordless cycle]{
        \begin{minipage}[c]{.47\textwidth}
            \centering        
            \includegraphics[width=.8\textwidth,trim={0.1in 0.2in 0.1in 0.2in},clip]{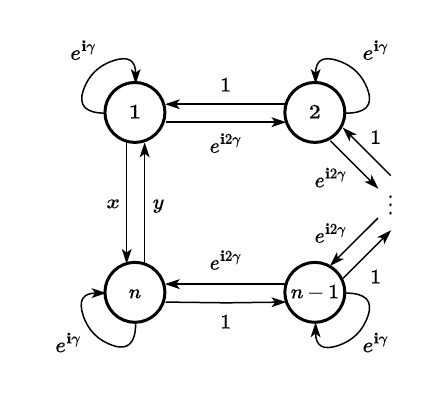} 
        \end{minipage}
        \label{fig:chordless-cycle}
    }
    \caption{\Cref{exp:unique-comp} shows that $\ppbcone{[\alpha,\beta]}(\mathcal{G})$ properly contains $\compcone{[\alpha,\beta]}(\mathcal{G})$ when the underlying graph $\mathcal{G}$ is not chordal}
\end{figure}

The above counterexamples illustrate that extra structural constraint on $\mathcal{G}$ is needed
in order for the necessary condition, i.e., that there is a completion with a specific property only if all specified principal submatrices have that property, to also be a sufficient condition.
This structural constraint turns out to be chordality for all abovesaid existing results in the literature. 
In the following theorem, we show that this is also the case for the phase-bounded property.
\begin{theorem} \label{thm:chordal-equiv}
    Given $\alpha,\beta \in \rF$ where $ 0 < \beta - \alpha < \pi$,
        and an undirected graph $\mathcal{G}$, the following statements are equivalent:
    \begin{enumerate}
        \item Graph $\mathcal{G}$ is chordal.
        \item $\dcompcone*{[\alpha,\beta]}(\mathcal{G}) = \pbcone*{[\alpha,\beta]}(\mathcal{G})$.
        \item $\compcone*{[\alpha,\beta]}(\mathcal{G}) = \ppbcone*{[\alpha,\beta]}(\mathcal{G})$.
    \end{enumerate}
\end{theorem}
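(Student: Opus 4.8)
The plan is to leverage Theorem~\ref{thm:dual-cone}, which already tells us that the four realified cones form two dual pairs, $\dcone*(\pbcone*{[\alpha,\beta]}(\mathcal{G})) = \compcone*{[\alpha,\beta]}(\mathcal{G})$ and $\dcone*(\ppbcone*{[\alpha,\beta]}(\mathcal{G})) = \dcompcone*{[\alpha,\beta]}$, together with the fact (from the proof of Theorem~\ref{thm:dual-cone}) that $\mapr$ maps each of these four cones onto the Cartesian square of its PSD counterpart. Combining the duality with biduality of closed convex cones, statements~2 and~3 are seen to be equivalent to each other even before assuming chordality: since all four cones are closed convex, $\dcompcone*{[\alpha,\beta]}(\mathcal{G}) = \pbcone*{[\alpha,\beta]}(\mathcal{G})$ holds if and only if the dual equation $\ppbcone*{[\alpha,\beta]}(\mathcal{G}) = \compcone*{[\alpha,\beta]}(\mathcal{G})$ holds. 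So the real content is the equivalence $1 \Leftrightarrow 2$.

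First I would reduce everything to the PSD case via $\mapr$. Because $\mapr$ is an invertible linear bijection carrying $\pbcone*{[\alpha,\beta]}(\mathcal{G})$ onto $\pbcone{[0]}(\mathcal{G}) \times \pbcone{[0]}(\mathcal{G})$ and $\dcompcone*{[\alpha,\beta]}(\mathcal{G})$ onto $\dcompcone{[0]}(\mathcal{G}) \times \dcompcone{[0]}(\mathcal{G})$ (shown in \eqref{eqa:pbcone-gph} and \eqref{eqa:decomp-gph}), statement~2 is equivalent to $\dcompcone{[0]}(\mathcal{G}) = \pbcone{[0]}(\mathcal{G})$, i.e.\ to the statement that every PSD matrix with sparsity pattern $\mathcal{G}$ decomposes as a sum of PSD matrices supported on cliques of $\mathcal{G}$. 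This is precisely the classical Agler--Helton--McCullough--Rodman / chordal PSD decomposition theorem, whose equivalence with chordality of $\mathcal{G}$ is the known result (cf.\ \cite{aglerPositiveSemidefiniteMatrices1988}, and also the survey treatment in \cite[Prop.~1.2.1]{bakonyiMatrixCompletionsMoments2011} and \cite{vandenbergheChordalGraphsSemidefinite2015}). So $1 \Leftrightarrow 2$ in the phase-bounded setting follows by transport of structure.

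The one point that requires a little care — and which I expect to be the main (mild) obstacle — is making the ``$1 \Leftrightarrow 2$'' reduction airtight in the $\Leftarrow$ direction, namely showing that if $\mathcal{G}$ is \emph{not} chordal then $\dcompcone*{[\alpha,\beta]}(\mathcal{G}) \subsetneq \pbcone*{[\alpha,\beta]}(\mathcal{G})$. The transport argument handles this automatically provided one knows that the PSD non-chordal inequality $\dcompcone{[0]}(\mathcal{G}) \subsetneq \pbcone{[0]}(\mathcal{G})$ is strict, which is the classical converse; alternatively one can exhibit an explicit separating witness directly in the phase-bounded cone, and Example~\ref{exp:unique-comp} essentially already supplies the raw material — the chordless-cycle Toeplitz matrices $\tilde C_\gamma$ and their failure of completability show $\compcone*{[\alpha,\beta]}(\mathcal{G}) \subsetneq \ppbcone*{[\alpha,\beta]}(\mathcal{G})$ for a chordless cycle, and since any non-chordal graph contains an induced chordless cycle of length $\geq 4$, a standard bordering/embedding argument lifts this to an arbitrary non-chordal $\mathcal{G}$. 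Either route closes the loop. Finally, I would organize the write-up as: (i) $2 \Leftrightarrow 3$ by conic duality and biduality using Theorem~\ref{thm:dual-cone}; (ii) $1 \Rightarrow 2$ by pushing through $\mapr$ and citing the chordal PSD decomposition; (iii) $\lnot 1 \Rightarrow \lnot 2$ either by the PSD converse transported through $\mapr$ or by the induced-chordless-cycle construction built on Example~\ref{exp:unique-comp}.
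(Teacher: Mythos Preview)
Your proposal is correct and follows essentially the same approach as the paper: transport the four realified cones through the invertible pattern-preserving map $\mapr$ to their Cartesian-square PSD counterparts (relations \eqref{eqa:pbcone-gph}--\eqref{eqa:decomp-gph}), and then invoke the classical PSD equivalence $\mathcal{G}$ chordal $\Leftrightarrow \dcompcone{[0]} = \pbcone{[0]}(\mathcal{G}) \Leftrightarrow \compcone{[0]} = \ppbcone{[0]}(\mathcal{G})$. Your additional observation that $2\Leftrightarrow 3$ follows directly from biduality via Theorem~\ref{thm:dual-cone}, and your alternative route for $\lnot 1 \Rightarrow \lnot 3$ via Example~\ref{exp:unique-comp}, are sound embellishments but not needed once the full three-way PSD equivalence is cited, which is exactly what the paper does.
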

\begin{proof}
    Recall the relations \cref{eqa:pbcone-gph} to \cref{eqa:decomp-gph} developed in the proof of \cref{thm:dual-cone}.
    Obviously, equality $2 \Leftrightarrow \dcompcone{[0]} = \pbcone{[0]}(\mathcal{G})$, and equality $3 \Leftrightarrow \compcone{[0]} = \ppbcone{[0]}(\mathcal{G})$.
    Then the theorem is a direct consequence of its PSD counterpart (see \cite[Thm.~2.4]{paulsenSchurProductsMatrix1989}, \cite[{Thm.~2.3}]{aglerPositiveSemidefiniteMatrices1988}):
    $\mathcal{G}$ is chordal $\Leftrightarrow \dcompcone{[0]} = \pbcone{[0]}(\mathcal{G}) \Leftrightarrow \compcone{[0]} = \ppbcone{[0]}(\mathcal{G})$.
\end{proof}

Through a complexification, i.e., converting $\left[\begin{smallmatrix}H \\ S\end{smallmatrix}\right] \in \vecSp[H]^n \times \vecSp[H]^n$ to $H+\ii* S \in \mathbb{C}^{n\times n}$, 
\Cref{thm:chordal-equiv} can be interpreted as the following corollary, which characterizes the completability and decomposability in phase-bounded cones.
\begin{corollary} \label{cor:comp-decomp}
    Let $\mathcal{G}$ be a chordal graph. Given $\alpha,\beta \in \rF$ where $0<\beta-\alpha<\pi$, the following statements hold:
    \begin{enumerate}
        \item A partial matrix $C$ with pattern $\mathcal{G}$ can be completed to a semisectorial matrix with phases bounded by $\alpha$ and $\beta$ if and only if each of its specified principal submatrices is semisectorial and has phases bounded by $\alpha$ and $\beta$.
        \item A full matrix $C$ with sparsity pattern $\mathcal{G}$ can be decomposed to a sum of rank one semisectorial matrices with phases bounded by $\alpha$ and $\beta$ if and only if $C$ itself is semisectorial and has phases bounded by $\alpha$ and $\beta$. 
    \end{enumerate}
\end{corollary}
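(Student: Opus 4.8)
\emph{Proof plan.} The plan is to transport the two cone identities furnished by \cref{thm:chordal-equiv} from the realified space $\vecSp[H]^n\times\vecSp[H]^n$ back to $\matSp{n}{n}$ via the complexification $(H,S)\mapsto H+\ii* S$, and then to unwind the verbal descriptions of the four complex cones \cref{cone-pb}--\cref{cone-decomposable}. First I would record the essentially definitional fact that the complexification is a real-linear bijection from $\vecSp[H]^n\times\vecSp[H]^n$ onto $\matSp{n}{n}$, with inverse the Toeplitz decomposition $C\mapsto(\hermp(C),\shermp(C))$, and that --- by the way the realified sets were introduced in \cref{sec:comp-dcomp-cond} --- it carries $\pbcone*{[\alpha,\beta]}(\mathcal{G})$, $\ppbcone*{[\alpha,\beta]}(\mathcal{G})$, $\compcone*{[\alpha,\beta]}(\mathcal{G})$, $\dcompcone*{[\alpha,\beta]}(\mathcal{G})$ onto $\pbcone{[\alpha,\beta]}(\mathcal{G})$, $\ppbcone{[\alpha,\beta]}(\mathcal{G})$, $\compcone{[\alpha,\beta]}(\mathcal{G})$, $\dcompcone{[\alpha,\beta]}(\mathcal{G})$, respectively. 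Applying this bijection to items~2 and~3 of \cref{thm:chordal-equiv}, chordality of $\mathcal{G}$ then yields the two complex identities $\dcompcone{[\alpha,\beta]}(\mathcal{G})=\pbcone{[\alpha,\beta]}(\mathcal{G})$ and $\compcone{[\alpha,\beta]}(\mathcal{G})=\ppbcone{[\alpha,\beta]}(\mathcal{G})$, and the corollary will come from reading these two identities out loud.

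For part~1, I would observe that each side of the stated equivalence is precisely membership in one of the two cones just equated. A partial matrix with pattern $\mathcal{G}$ can be completed to a matrix in $\pbcone{[\alpha,\beta]}$ if and only if, regarding it as a matrix $C$ that carries its specified entries (and, say, zeros elsewhere), one has $C\in\compcone{[\alpha,\beta]}(\mathcal{G})$; this is exactly the interpretation of \cref{cone-completable} spelled out right after that definition. On the other side, a principal submatrix $C[\setI]$ is fully specified if and only if $\setI$ induces a complete subgraph of $\mathcal{G}$, i.e.\ $\setI$ is a clique; and, since ``lying in $\pbcone{[\alpha,\beta]}$'' means exactly ``semisectorial with phases in $[\alpha,\beta]$'' by \cref{eqa:pb-cone}, the requirement that every specified principal submatrix be semisectorial with phases bounded by $\alpha,\beta$ is precisely $C\in\ppbcone{[\alpha,\beta]}(\mathcal{G})$ of \cref{cone-ppb} (and, by compression of numerical ranges, $W(C[\setI])\subseteq W(C)$, so it even suffices to test the maximal-clique submatrices). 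The identity $\compcone{[\alpha,\beta]}(\mathcal{G})=\ppbcone{[\alpha,\beta]}(\mathcal{G})$ then gives part~1.

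For part~2, I would similarly match the ``decomposable'' side with $\dcompcone{[\alpha,\beta]}(\mathcal{G})$ and the ``$C$ itself semisectorial with phases bounded by $\alpha,\beta$'' side with $\pbcone{[\alpha,\beta]}(\mathcal{G})$. Here the intended reading of part~2 --- consistent with the problem posed in \cref{subsec:pb-form} and with \cref{cone-decomposable} --- is that each rank-one summand has a sparsity pattern contained in that of $\mathcal{G}$; a rank-one matrix $e^{\ii*\phi}vv\ct*$ has such a pattern if and only if the support of $v$ is a clique, so a sum $C=\sum_i e^{\ii*\phi_i}v_iv_i\ct*$ of rank-one semisectorial matrices with phases in $[\alpha,\beta]$ and patterns refining $\mathcal{G}$ is exactly an element of $\dcompcone{[\alpha,\beta]}(\mathcal{G})$, the rank-one restriction on summands being harmless by \cref{rmk:rank-one}. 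Conversely, a matrix $C$ with sparsity pattern $\mathcal{G}$ that is semisectorial with phases bounded by $\alpha,\beta$ is exactly an element of $\pbcone{[\alpha,\beta]}(\mathcal{G})=\pbcone{[\alpha,\beta]}\cap\cmatofgraph*(\mathcal{G})$. The identity $\dcompcone{[\alpha,\beta]}(\mathcal{G})=\pbcone{[\alpha,\beta]}(\mathcal{G})$ then gives part~2.

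There is no deep obstacle here: the corollary is a dictionary translation of \cref{thm:chordal-equiv}, and the only care required is bookkeeping --- confirming that the complexification identifies each realified cone with its complex namesake (immediate from the definitions in \cref{sec:comp-dcomp-cond}), and faithfully matching the phrases ``completable'', ``all specified principal submatrices phase-bounded'', and ``decomposable into rank-one phase-bounded summands sparser than $\mathcal{G}$'' to the set definitions \cref{cone-completable}, \cref{cone-ppb}, \cref{cone-decomposable} via the clique/complete-subgraph and rank-one/clique-support correspondences. Once those identifications are in place, everything is immediate from \cref{thm:chordal-equiv}.
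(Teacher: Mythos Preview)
Your proposal is correct and matches the paper's approach exactly: the paper simply states that the corollary is obtained from \cref{thm:chordal-equiv} ``through a complexification, i.e., converting $\left[\begin{smallmatrix}H\\S\end{smallmatrix}\right]\in\vecSp[H]^n\times\vecSp[H]^n$ to $H+\ii* S\in\mathbb{C}^{n\times n}$'', without writing out a separate proof. Your write-up is just a more detailed unpacking of that one-line justification, so there is nothing to add or compare.
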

\begin{remark} \label{rmk:banded}
    In particular, note that the so-called banded pattern \cite[{Sec.~2.6.1}]{bakonyiMatrixCompletionsMoments2011} is a special case of chordal pattern, 
        and \cite[{Thm.~9.1 and 10.1}]{wangPhasesComplexMatrix2020} is an immediate consequence of \cref{cor:comp-decomp}.
        We may paraphrase the completability condition in Corollary~\ref{cor:comp-decomp} as follows: a partial matrix $C$ with chordal pattern $\mathcal{G}$ can be completed to a semisectorial matrix if and only if all of its specified principal submatrices are semisectorial and their phases can be accommodated in a closed interval of length strictly less than $\pi$.
    When this condition is met, the smallest phase-bounded cone that $C$ can be completed to is $\pbcone{[\optimal(\alpha),\optimal(\beta)]}$ with
    \begin{align*}
        \optimal(\alpha) = \min_{\mathcal{K} \text{ is a clique of }\mathcal{G}}\, \minph(\submat{C}(\mathcal{K})),\quad 
        \optimal(\beta) = \max_{\mathcal{K} \text{ is a clique of }\mathcal{G}}\, \maxph(\submat{C}(\mathcal{K}))
    \end{align*}
    where the phases of all clique supporting matrices are chosen so that they fall within a $\pi$-interval.
\end{remark}

\begin{remark} \label{rem:further-generalization}
    It is also worth noting that not only the phase-related map $\mapr$, but also any invertible sparsity-pattern-preserving linear map $T$ on $\vecSp[H]^n \times \vecSp[H]^n$ can fit into the proof framework that leads to \cref{thm:dual-cone}, \cref{thm:chordal-equiv}, and subsequently, \cref{cor:comp-decomp}.
    As a consequence, similar results will hold for a matrix property, i.e., some matrix cone in $\mathbb{C}^{n\times n}$, that realifies to $T\inv\,(\pbcone{[0]}\times \pbcone{[0]})$.
\end{remark}

Finally, we conclude this section with an extension of the completability characterization in terms of strictly phase-bounded matrices $\spbcone{(\alpha,\beta)}$, which is the cone of \emph{sectorial matrices} with phases in the open interval $(\alpha,\beta)$. In this scenario, we may consider a directed graph (digraph) whose reflexive part is undirected. A digraph $\mathcal{G} = (\vset*,\eset*)$ is simply a graph which may have uni-directional edges, i.e., the edge set $\eset*$ might be asymmetric. Its reflexive part refers to the subgraph induced by the set $\vset*[R]$ of vertices with self-loops.
\begin{corollary} \label{cor:comp-ext}
    Let $\mathcal{G}$ be a digraph with undirected and chordal reflexive part $\mathcal{G}(\vset*[R])$.
    Provided $\alpha,\beta$ where $0<\beta-\alpha<\pi$, a partial matrix $C$ with pattern $\mathcal{G}$ can be completed to a matrix in $\spbcone{(\alpha,\beta)}$ if and only if $\submat{C}(\mathcal{K})$ is in $\spbcone{(\alpha,\beta)}$ for all cliques $\mathcal{K}\subseteq\, \vset*[R]$. 
\end{corollary}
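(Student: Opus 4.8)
The plan is to reduce the problem to its \emph{reflexive part} $\mathcal{G}(\vset*[R])$ and then pad out with extra diagonal mass. After a permutation congruence --- which, as recalled in \cref{subsec:graph}, leaves every phase unchanged --- we may list the vertices of $\vset*[R]$ first, so that a partial matrix $C$ with pattern $\mathcal{G}$ splits into the principal block $\submat{C}(\vset*[R])$, itself a partial matrix with the undirected chordal pattern $\mathcal{G}(\vset*[R])$, together with the remaining specified entries, all of which are off-diagonal since a vertex outside $\vset*[R]$ carries no self-loop. Necessity is then immediate: if $C$ is completed by $\widehat{C}\in\spbcone{(\alpha,\beta)}$ and $\mathcal{K}\subseteq\vset*[R]$ is a clique, then $\mathcal{G}(\vset*[R])$ is undirected, so every entry of $\submat{C}(\mathcal{K})$ is specified and $\submat{C}(\mathcal{K})=\submat{\widehat{C}}(\mathcal{K})$; restricting to $\mathcal{K}$-supporting unit vectors gives $\nr(\submat{\widehat{C}}(\mathcal{K}))\subseteq\nr(\widehat{C})$, which lies in the open cone subtended by $\alpha$ and $\beta$, whence $\submat{C}(\mathcal{K})\in\spbcone{(\alpha,\beta)}$.

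For the converse, assume $\submat{C}(\mathcal{K})\in\spbcone{(\alpha,\beta)}$ for every clique $\mathcal{K}\subseteq\vset*[R]$. The first step is to complete the chordal block $\submat{C}(\vset*[R])$ inside $\spbcone{(\alpha,\beta)}$, which requires the strict analogue of \cref{cor:comp-decomp}. This analogue is proved exactly as \cref{cor:comp-decomp} itself: repeating the computation in the proof of \cref{lem:pbcone-cond} with the two non-strict PSD inequalities replaced by strict ones shows that the invertible, sparsity-pattern-preserving map $\mapr$ carries the realification of $\spbcone{(\alpha,\beta)}$ onto $\pdcone\times\pdcone$; running the transfer argument behind \cref{thm:dual-cone} verbatim with $\pdcone$ in place of $\psdcone$, and invoking the classical positive-definite completion theorem for chordal patterns \cite{gronePositiveDefiniteCompletions1984,vandenbergheChordalGraphsSemidefinite2015}, then yields that a partial matrix with chordal pattern admits an $\spbcone{(\alpha,\beta)}$-completion if and only if each of its clique-supporting submatrices lies in $\spbcone{(\alpha,\beta)}$. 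Applied to $\submat{C}(\vset*[R])$ this produces a completion $D\in\spbcone{(\alpha,\beta)}$.

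It remains to extend $D$ to a full $\spbcone{(\alpha,\beta)}$-completion $\widehat{C}$ of $C$, which is where the asymmetry of $\mathcal{G}$ is absorbed. Fix $\gamma\in(\alpha,\beta)$ and a scalar $t>0$, and define $\widehat{C}$ by placing $D$ in the $\vset*[R]$-block, setting $\widehat{C}_{kk}=t\,e^{\ii*\gamma}$ for each $k\notin\vset*[R]$, keeping every specified entry of $C$, and putting $0$ in all remaining positions. This is a legitimate completion of $C$: the $\vset*[R]$-block agrees with $D$, which completes $\submat{C}(\vset*[R])$, and every other specified entry of $C$ is off-diagonal and has been copied verbatim. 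Writing $r=|\vset*[R]|$ and conjugating by the invertible diagonal matrix $S_t=\mati_r\oplus t^{-1/2}\mati_{n-r}$, congruence invariance of $\spbcone{(\alpha,\beta)}$ gives $\widehat{C}\in\spbcone{(\alpha,\beta)}$ if and only if $S_t\widehat{C}S_t\ct*\in\spbcone{(\alpha,\beta)}$, and $S_t\widehat{C}S_t\ct*\to D\oplus e^{\ii*\gamma}\mati_{n-r}$ as $t\to\infty$, because the cross blocks are scaled by $t^{-1/2}$, the off-diagonal part of the trailing block by $t^{-1}$, and the trailing diagonal becomes $e^{\ii*\gamma}\mati_{n-r}$. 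The limit lies in $\spbcone{(\alpha,\beta)}$ since $e^{\ii*\gamma}\mati_{n-r}\in\spbcone{(\alpha,\beta)}$ for $\gamma\in(\alpha,\beta)$ and a direct sum of members of $\spbcone{(\alpha,\beta)}$ is again a member, both facts being immediate from the $\mapr/\pdcone$ description because the two conic conditions are stable under direct sums. Finally, $\spbcone{(\alpha,\beta)}$ is open, being the preimage of the open set $\pdcone\times\pdcone$ under the homeomorphism $C\mapsto\mapr\big(\hermp(C),\shermp(C)\big)$, so $S_t\widehat{C}S_t\ct*$ --- and hence $\widehat{C}$ --- lies in $\spbcone{(\alpha,\beta)}$ for all sufficiently large $t$, which finishes the argument.

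I expect the real work to be the first step of the converse. \Cref{cor:comp-decomp} is stated for the closed cone $\pbcone{[\alpha,\beta]}$, and a partial matrix all of whose clique submatrices are merely semisectorial can fail to admit a \emph{sectorial} completion; one genuinely has to upgrade the completion statement to the strict cone, which is exactly what the $\mapr/\pdcone$ transfer together with the classical positive-definite completion theorem provides. Once that is in hand, the padding step is routine: the vertices outside $\vset*[R]$ contribute only free diagonal entries, and a suitable congruence drives those to infinity, decoupling the non-reflexive block from the chordal core.
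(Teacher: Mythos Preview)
Your proof is correct and follows essentially the same strategy as the paper: permute to isolate the reflexive block, complete that block in $\spbcone{(\alpha,\beta)}$ via the strict analogue of \cref{cor:comp-decomp}, and then absorb the non-reflexive vertices by pushing their free diagonal entries to infinity. The only cosmetic difference is in the last step --- the paper argues directly via $\mapr$ and diagonal dominance in $\pdcone\times\pdcone$, whereas you package the same large-diagonal idea as a scaling congruence together with openness of $\spbcone{(\alpha,\beta)}$.
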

\begin{proof}
    The ``only if'' part is straightforward, we show the ``if'' part.
    Assume that $\mathcal{G}$ has $n$ vertices, of which $k$ vertices are void of self-loop.
    We can always find a permutation matrix $P$ to put $C$ in the following form:
        \begin{align*} 
            P C P\tp = 
            \begin{bNiceArray}{cccc|c}[
                margin, first-row
            ]
                \Block{1-4}{k}& & & & n-k\\
                ?   & \star     & \cdots &  \star    &   \star  \\
                \star   & ? &   \ddots & \vdots          & \vdots                   \\
                \vdots  &  \ddots  & \ddots         & \star      & \vdots                       \\ 
                \star   & \cdots  & \star           & ?          & \star \\ \hline
                \star & \cdots & \cdots & \star & \submat{C}(\vset*[R])
            \end{bNiceArray} \quad
            \left(\begin{NiceArray}{c}
                \text{entries marked by } \star \text{ can be either} \\
                        \text{specified or ``?'', which depends on }\mathcal{G}
            \end{NiceArray}\quad\right).
        \end{align*}
    The submatrix $\submat{C}(\vset*[R])$ is a partial matrix with pattern $\mathcal{G}(\vset*[R])$.
    It is not hard to verify that the strictly phase-bounded version of \cref{cor:comp-decomp} is also true, i.e., we can first complete the partial submatrix $\submat{C}(\vset*[R])$ to a full matrix $\submat{K}(\vset*[R])$ in $\spbcone{(\alpha,\beta)}$.
    By replacing $\submat{C}(\vset*[R])$ in $PCP\tp$ with $\submat{K}(\vset*[R])$, we obtain another partial matrix $\hat{C}$.
    Denote the first $k$ diagonal entries of $\hat{C}$ by $d_1,\ldots,d_k$, which are to be determined. We have:
    \setlength{\arraycolsep}{.8pt}
    \NiceMatrixOptions{cell-space-limits = 2pt}
    \begin{align*}
        \mapr^n \begin{bNiceMatrix}
            \hermp(\hat{C}) \\
            \shermp(\hat{C})
        \end{bNiceMatrix} = 
            \begin{bNiceArray}{ccccc}[margin]
                \left|d_1\right| \sin(\angle d_1 - \alpha)    & \star          & \cdots  & \star & \star \\
                \star & \left|d_2\right| \sin(\angle d_2 - \alpha)    & \ddots  & \vdots  &\vdots \\
                \vdots & \ddots   &\ddots & \star & \vdots      \\
                \star & \cdots & \star & \left|d_k\right| \sin({\angle d_k - \alpha})    & \star  \\ 
                \star & \cdots & \star & \star & \submat{H}(\vset*[R]) \\ \hline
            \left|d_1\right| \sin(\beta - \angle d_1)    & \star          & \cdots  & \star & \star \\
                \star & \left|d_2\right| \sin(\beta - \angle d_2)    & \ddots  & \vdots  &\vdots \\
                \vdots & \ddots   &\ddots & \star & \vdots      \\
                \star & \cdots & \star & \left|d_k\right| \sin({\beta - \angle d_k})    & \star  \\ 
                \star & \cdots & \star & \star & \submat{S}(\vset*[R])
            \end{bNiceArray}
        \end{align*}
    where $\left[\submat{H}(\vset*[R])\hspace{.4em} \submat{S}(\vset*[R]) \right]\ct* = \mapr^{n-k}\left[\hermp((\submat{K}(\vset*[R]))) \hspace{.4em} \shermp((\submat{K}(\vset*[R]))) \right]\ct*$. 
    We can show, following the same line of proof for \cref{lem:pbcone-cond}, that $\mapr\,\spbcone*{(\alpha,\beta)}  = \pdcone \times \pdcone$.
    Since $\submat{K}(\vset*[R]) \in \spbcone{(\alpha,\beta)}$, then $\submat{H}(\vset*[R])$ and $\submat{S}(\vset*[R])$ are both positive definite.
    Obviously, for each $i=1,\ldots,k$, we can pick $\angle d_i$ from the open interval $(\alpha,\beta)$ and then set $|d_i|$ to be sufficiently large such that $\mapr^n [\hermp(\hat{C})\hspace{.5em}\shermp(\hat{C})]\ct*$ falls within $\pdcone \times \pdcone$ regardless of the remaining entries. 
    This means that $\hat{C}$ can always be completed within $\spbcone{(\alpha,\beta)}$, and thus, so can the original partial matrix $C$. This completes the proof. 
\end{proof}

\section{Characterization of all phased-bounded completions} \label{sec:characterization}
In this section, we turn to the synthesis side of the phase-bounded matrix completion problem. 
Given $\alpha,\beta$ where $0<\beta-\alpha<\pi$, and a (generalized) banded graph $\mathcal{G} = \graph{\vset*}{\eset*}$ as defined in \cite{bakonyiCentralMethodPositive1992},  we consider a partial matrix $C$ with pattern $\mathcal{G}$ and with all of its specified principal submatrices falling within $\pbcone{[\alpha,\beta]}$.
Note that the graph $\mathcal{G}$ is said to be \emph{banded} if $\{i_0,j_0\}\in \eset*$ ($i_0<j_0$) implies $\{i,j\} \in \eset*$ for all $i, j$ satisfying $i_0\leq i < j \leq j_0$.
A banded graph can be verified, through its definition and \cref{lem:chordal-peo}, to be chordal and is by default in a perfect elimination order. 
Then by \cref{cor:comp-decomp}, $C$ must admit at least one completion in $\pbcone{[\alpha,\beta]}$.
Our goal is to parameterize the following set of all phase-bounded completions of $C$:
\begin{align}
    \compset*(C)[\alpha,\beta] :=  \left\{
        K \in \pbcone{[\alpha,\beta]}: K-\cenComp[K](C)\in \cmatofgraph*(\compg(\mathcal{G}))
    \right\} \label{set:all-comp}
\end{align} where $\cenComp[K](C)$ can be any completion of $C$.
\begin{remark} \label{rmk:banded-no-chordal}
    We focus on the banded pattern here for two reasons: (1) in this case, the parameterization for all completions admits a neat closed form; and (2) in the more general chordal case, we can iteratively complete the matrix following the perfect elimination ordering, where the completion problem in each iterative step involves only a banded pattern.
\end{remark}

We know, from \cref{lem:pbcone-cond} and the pattern-preserving property of $\mapr$, that the phase-bounded completion problem can be equivalently converted to two independent PSD completion problems concerning partially PSD matrices $\hermp[\alpha](C)$ and $\shermp[\beta](C)$ where $[\hermp[\alpha](C)\hspace{.5em}\shermp[\beta](C)]\ct*\,:=\mapr [\hermp(C)\hspace{.5em}\shermp(C)]\ct*$.
Hence, we first revisit the relevant PSD completion results. 
For a partially PSD matrix $H$ with pattern $\mathcal{G}$, its central completion $\cenComp(H)$  is defined as the particular completion found, in a ``staircase'' fashion, by \cref{alg:psd-central}.
\begin{algorithm}[t]
    \caption{
        Finding the central PSD completion $\cenComp(H)$ of $H$
    } \label{alg:psd-central}
    \algrenewcommand\algorithmicensure{\textbf{output:\,}}
    \algrenewcommand\algorithmicrequire{\textbf{input:\,}}

    \begin{algorithmic}[1]
        \Require Partially PSD matrix $H$, and its pattern $\mathcal{G} = \graph{\vset*}{\eset*}$.
        \Ensure Central PSD completion $\cenComp(H)$. 
        \Statex
        \State $J(0) \leftarrow 0;\quad l_0 \leftarrow 1$;
        \For{$i \leftarrow 1 \text{ \textbf{to} } |\vset*|$}
            \State $J(i) \leftarrow  \max \{j \mid \{i,j\} \in \eset*\}$;
            \If{$J(i)-J(i-1)>0$ and $i \geq 2$}
                \State $x \leftarrow i;\quad y \leftarrow J(l_0);\quad z \leftarrow J(i);\quad l_0 \leftarrow i$; 
                \If{$y-x+1>0$}
                    \State  $\setX \leftarrow \{1,  \ldots, x-1\}$;\hspace{1em}
                            $\setY \leftarrow \{x,  \ldots, y\}$;\hspace{1em}
                            $\setZ \leftarrow \{y+1,\ldots, z\}$;
                    \State  
                            $T_{22}\leftarrow \submat{H}(\setY)$;\hspace{.5em}
                            $T_{12}\leftarrow \submat{H}(\setX,\setY)$;\hspace{.5em}
                            $T_{23}\leftarrow \submat{H}(\setY,\setZ)$;
                    \State  $T_{13}\leftarrow T_{12} T_{22}\pinv T_{23}$;
                    \State  $\submat{H}(\setX, \setZ)     
                            \leftarrow T_{13}$,
                            $\submat{H}({\setZ, \setX})    
                            \leftarrow T_{13}\ct*$;
                    \LComment{assign the RHS value to the designated block of $H$.}
                \Else
                    \State  $\setX \leftarrow \{1,  \ldots, x-1\}; \quad
                            \setZ \leftarrow \{y+1,\ldots, z\}$;
                    \State  $\submat{H}(\setX, \setZ)  \leftarrow \mato*; \quad
                            \submat{H}({\setZ, \setX}) \leftarrow \mato*$;
                \EndIf
            \EndIf
        \EndFor
        \State $\cenComp(H) \leftarrow H$;
    \end{algorithmic}
\end{algorithm} 
        
\begin{remark} \label{rmk:psd-central} 
    \Cref{alg:psd-central} is essentially decomposing the completion problem to $3$ by $3$ subproblems, and completing the big matrix in a ``staircase'' manner. 
    Due to the inheritance principle and uniqueness of central completion \mbox{\cite[{p.~647}]{bakonyiEuclidianDistanceMatrix1995}}, 
        $\cenComp(H)$ generated here equals to the one obtained by setting all Schur parameters of missing entries of $H$ to zero \cite[{Thm.~2.5.5}]{bakonyiMatrixCompletionsMoments2011}.
\end{remark}
A matrix $\Gamma$ is said to be \emph{contractive} if its spectral norm, a.k.a., its largest singular value, is no greater than 1. Based on the central completion generated by \cref{alg:psd-central}, one can have the following contractive parameterization of the set of all PSD completions of $H$:
\begin{lemma}{\cite[{Thm.~2.6.3}]{bakonyiMatrixCompletionsMoments2011}}\label{lem:psd-char}
    Given a partially PSD matrix $H$ with banded pattern $\mathcal{G}$. Let $\cenComp(H)$ be the central completion of $H$  and $\cenComp(L), \cenComp(R)$ be the lower and upper Cholesky factors of $\cenComp(H)$, i.e., $\cenComp(H) = \cenComp(L)\ct* \cenComp(L) = \cenComp(R)\ct* \cenComp(R)$ where $\cenComp(L), \cenComp(R)$ are lower and upper triangular, respectively. 
    Then there exists a unitary matrix $\cenComp(W)$ such that $\cenComp(R) = \cenComp(W) \cenComp(L)$. And there is a one-to-one correspondence between the following two sets:
        \begin{align*}
            \compset*(H)[+]     &:= \left\{K \in \psdcone \mid  K-\cenComp[K](H)\in \cmatofgraph*(\compg(\mathcal{G}))\right\}, \\
            \mathbf{\Gamma}_G  &:= \left\{\vphantom{\rSp*(\cenComp(L))^\perp} \Gamma \in \cmatofgraph*(\compg(\mathcal{G})) \mid
            \|\Gamma\|_2\leq 1, \Gamma \text{ is upper triangular and behaves as a zero}
            \right. \\
            &\hspace{6em} \left. 
            \text{operator when acting from $\rSp*(\cenComp(R))^\perp$ or onto $\rSp*(\cenComp(L))^\perp$}\right\}.
        \end{align*}
    The correspondence $f: \mathbf{\Gamma}_{\mathcal{G}} \mapsto \compset*(P)[+]$ has a closed form as follows:
        \begin{align} \label{eqa:psd-char}
            f(\Gamma;\cenComp(H)) = \cenComp(R)\ct* \invct*(\mati+\cenComp(W)\Gamma)(\mati - \Gamma\ct* \Gamma)\inv(\mati+\cenComp(W)\Gamma)\cenComp(R).
        \end{align}
\end{lemma}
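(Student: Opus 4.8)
The plan is to prove this entirely inside the positive semidefinite cone --- \cref{lem:pbcone-cond} then transfers it to phase‑bounded cones via the two PSD completion problems produced by $\mapr$ --- using the central completion $\cenComp(H)$ of \cref{alg:psd-central} as the base point of an inductive parameterization. First I would record the structural facts. A banded graph is chordal, so the chordal PSD completion theorem \cite{gronePositiveDefiniteCompletions1984} guarantees that $H$ has a PSD completion, and $\cenComp(H)$ is the \emph{unique} such completion whose inverse again has pattern $\mathcal{G}$ --- exactly what the staircase recursion of \cref{alg:psd-central} computes (cf.\ \cref{rmk:psd-central}). Since the Cholesky factor of a banded PSD matrix is again banded, from $\cenComp(H)^{-1}\in\cmatofgraph*(\mathcal{G})$ one deduces that $\cenComp(L)^{-1}$ and $\cenComp(R)^{-1}$ are banded; and the computation $\cenComp(W)^{\mathsf{H}}\cenComp(W)=\cenComp(L)^{-\mathsf{H}}\cenComp(H)\cenComp(L)^{-1}=\mati$ (read on $\rSp*(\cenComp(L))$) identifies $\cenComp(W):=\cenComp(R)\cenComp(L)^{\dagger}$ as the unitary --- a partial isometry, when $\cenComp(H)$ is singular --- with $\cenComp(R)=\cenComp(W)\cenComp(L)$ promised by the statement. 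The rank‑deficiency of $\cenComp(H)$ is precisely what forces the zero‑operator conditions on $\mathbf{\Gamma}_{\mathcal{G}}$, which together with the upper‑triangularity of $\Gamma$ keep $\mati+\cenComp(W)\Gamma$ invertible on the relevant subspace, so that \cref{eqa:psd-char} is well defined.

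The heart of the argument is an induction on the number of unspecified diagonals of $\mathcal{G}$. Adjoining one further diagonal to the already‑completed band is a $2\times2$‑block Hermitian completion problem with three blocks fixed and the new off‑diagonal block $X$ free; by the equivalence $\rho\mati-\left[\begin{smallmatrix}\mato & A \\ A\ct* & \mato\end{smallmatrix}\right]\pogeq\mato\Leftrightarrow\|A\|_2\le\rho$ and a Schur complement, this is a contractive‑completion problem for $X$, which Parrott's theorem \cite[{Thm.~2.21}]{zhouRobustOptimalControl1995} parameterizes by an arbitrary contraction $\gamma$ --- supported exactly on the newly revealed positions --- through a linear fractional map, with $\gamma=\mato$ giving the staircase (central) choice. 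Each such step leaves the previously determined entries untouched, so composing the steps keeps $K-\cenComp(H)$ in $\cmatofgraph*(\compg(\mathcal{G}))$, satisfies $f(\mato;\cenComp(H))=\cenComp(H)$, and --- a composition of linear fractional maps being again linear fractional --- produces a single linear fractional parameterization of $\compset*(H)[+]$ whose parameter is the strictly upper triangular $\Gamma\in\cmatofgraph*(\compg(\mathcal{G}))$ obtained by stacking the $\gamma$'s. Surjectivity and injectivity of $f$ are inherited step‑by‑step from Parrott's theorem, and the zero‑operator constraints single out, in the singular case, the contractions that actually occur.

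It then remains to verify that this composite map is precisely \cref{eqa:psd-char}. I would write each step in chain‑scattering (Redheffer) form relative to the running Cholesky factor and multiply the transfer matrices, using $\cenComp(R)=\cenComp(W)\cenComp(L)$ and the band structure of $\cenComp(L)^{-1},\cenComp(R)^{-1}$ to telescope the product into the congruence $\cenComp(R)^{\mathsf{H}}(\mati+\cenComp(W)\Gamma)^{-\mathsf{H}}(\mati-\Gamma^{\mathsf{H}}\Gamma)(\mati+\cenComp(W)\Gamma)^{-1}\cenComp(R)$, which at $\Gamma=\mato$ returns $\cenComp(R)^{\mathsf{H}}\cenComp(R)=\cenComp(H)$. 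Alternatively one can verify \cref{eqa:psd-char} directly: $(\mati-\Gamma^{\mathsf{H}}\Gamma)\pogeq\mato$ because $\|\Gamma\|_2\le1$, so $f(\Gamma;\cenComp(H))$ is PSD as a congruence of it, and a support computation using the band structure of the Cholesky factors shows $f(\Gamma;\cenComp(H))-\cenComp(H)\in\cmatofgraph*(\compg(\mathcal{G}))$. The step I expect to be the main obstacle is exactly this collapse of the iterated parameterization to the closed form, together with the rank‑degenerate bookkeeping it carries --- making precise, when $H$ (hence $\cenComp(H)$) is singular, how the remaining inverse in \cref{eqa:psd-char} is read on the relevant range spaces, that $f(\Gamma;\cenComp(H))$ is then still well defined and independent of those choices, and that the per‑step contractions assemble consistently into a $\Gamma$ meeting the stated zero‑operator conditions; everything else (the isometry property of $\cenComp(W)$, positivity, and the step‑by‑step surjectivity and injectivity) is routine once the induction and the structural lemmas on $\cenComp(L),\cenComp(R),\cenComp(W)$ are set up. For a fully self‑contained treatment these band‑extension facts may be quoted directly from \cite[Ch.~2]{bakonyiMatrixCompletionsMoments2011}.
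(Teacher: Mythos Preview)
The paper does not supply its own proof of this lemma: it is quoted verbatim as \cite[{Thm.~2.6.3}]{bakonyiMatrixCompletionsMoments2011} and used as a black box to feed into \cref{thm:pha-char}. So there is nothing to compare your argument against in the paper itself.

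That said, your sketch is broadly the right shape and tracks the proof in the cited source: induction on the number of missing diagonals, a one-step $2\times2$ completion handled by Parrott/Schur complement, and composition of the resulting linear fractional maps. Two cautions. First, your opening sentence suggests you will ``prove this entirely inside the PSD cone'' and then transfer via $\mapr$ to phase-bounded cones; that transfer is the content of \cref{thm:pha-char}, not of this lemma, so it does not belong here. Second, you correctly flag the singular case as the delicate point, but you understate it: when $\cenComp(H)$ is singular the Cholesky factors are not unique, $\cenComp(W)$ is only a partial isometry, and the inverse $(\mati+\cenComp(W)\Gamma)^{-1}$ must be interpreted on the range spaces. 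Making the bijection precise in that regime (so that the zero-operator constraints on $\mathbf{\Gamma}_{\mathcal{G}}$ are exactly what is needed for well-definedness and for injectivity) is real work, not bookkeeping, and is where the argument in \cite{bakonyiMatrixCompletionsMoments2011} spends its effort. If you intend a self-contained proof rather than a citation, that is the part you would need to write out in full.
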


By running \cref{alg:psd-central} on partial matrices $\hermp[\alpha](C)$ and $\shermp[\beta](C)$, we can obtain their central PSD completions $\cenComp*(\hermp[\alpha](C))$ and $\cenComp*(\hermp[\beta](C))$.
By combining these central completions and \cref{lem:psd-char}, we are now ready to provide a parameterization of $\compset*(C)[\alpha,\beta]$: 
\begin{theorem}\label{thm:pha-char}
    Provided $\alpha,\beta$ where $0<\beta-\alpha<\pi$, let $C$ be a partial matrix with chordal pattern $\mathcal{G}$ and assume that its arbitrary completion $\cenComp[K](C)$ falls within $\ppbcone{[\alpha,\beta]}(\mathcal{G})$. 
    Then $\compset*(C)[\alpha,\beta]$ is nonempty and there is a one-to-one correspondence between the set $\compset*(C)[\alpha,\beta]$ and the set $\mathbf{\Gamma}_{\mathcal{G}}\times \mathbf{\Gamma}_{\mathcal{G}}$. The correspondence $g: \mathbf{\Gamma}_{\mathcal{G}}\times \mathbf{\Gamma}_{\mathcal{G}} \mapsto \compset*(C)[\alpha,\beta]$ is as follows:
    \begin{align} \label{eqa:pb-char}
        g(\Gamma_1,\Gamma_2;\cenComp*(\hermp[\alpha](C)), \cenComp*(\hermp[\beta](C))) = \frac{1}{\sin(\beta-\alpha)}\left(
                \ee(\ii*\beta) f(\Gamma_1;\cenComp*(\hermp[\alpha](C))) + 
                \ee(\ii*\alpha)f(\Gamma_2;\cenComp*(\hermp[\beta](C)))
            \right).
    \end{align}
\end{theorem}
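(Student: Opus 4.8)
The plan is to convert the single phase-bounded completion problem for $C$ into two decoupled positive semidefinite completion problems via the invertible, sparsity-pattern-preserving map $\mapr$, to parameterize each of them by \cref{lem:psd-char}, and finally to recombine the two parameterizations into $g$ and evaluate the result in closed form. Throughout, note that $\compset*(C)[\alpha,\beta]$ is simply the set of all completions of the partial matrix $C$ lying in $\pbcone{[\alpha,\beta]}$, and similarly $\compset*(H)[+]$ in \cref{lem:psd-char} is the set of all PSD completions of a partially PSD $H$.

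First I would make the reduction precise. For a full matrix $K$, set $\bigl[\hermp[\alpha](K)\ \shermp[\beta](K)\bigr]\ct* := \mapr\bigl[\hermp(K)\ \shermp(K)\bigr]\ct*$ as in the paragraph preceding the theorem; by \cref{lem:pbcone-cond}, $K\in\pbcone{[\alpha,\beta]}$ iff both $\hermp[\alpha](K)$ and $\shermp[\beta](K)$ lie in $\psdcone$. Since $\mapr = M\otimes\mati_n$ with $M = \left[\begin{smallmatrix}-\sin\alpha & \cos\alpha\\ \sin\beta & -\cos\beta\end{smallmatrix}\right]$ mixes only the $(i,j)$-entry of $\hermp(\cdot)$ with the $(i,j)$-entry of $\shermp(\cdot)$, is invertible, and $\eset*$ is symmetric, a matrix $K$ completes $C$ (i.e.\ $K-\cenComp[K](C)\in\cmatofgraph*(\compg(\mathcal{G}))$) iff $\hermp[\alpha](K)$ completes the partial matrix $\hermp[\alpha](C)$ and $\shermp[\beta](K)$ completes $\shermp[\beta](C)$, both with pattern $\mathcal{G}$; moreover extracting a principal submatrix commutes with $\mapr$, so the hypothesis $\cenComp[K](C)\in\ppbcone{[\alpha,\beta]}(\mathcal{G})$ together with \cref{lem:pbcone-cond} applied to each specified principal submatrix shows that $\hermp[\alpha](C)$ and $\shermp[\beta](C)$ are partially PSD. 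As $\mapr$ is a linear bijection of $\vecSp[H]^n\times\vecSp[H]^n$ onto itself that both preserves and reflects the sparsity pattern, the Toeplitz decomposition followed by $\mapr$ therefore restricts to a bijection $\compset*(C)[\alpha,\beta]\leftrightarrow\compset*(\hermp[\alpha](C))[+]\times\compset*(\shermp[\beta](C))[+]$, sending $K$ to $(\hermp[\alpha](K),\shermp[\beta](K))$ and a pair $(H_1,H_2)$ back to $\hermp(K)+\ii*\shermp(K)$ with $\bigl[\hermp(K)\ \shermp(K)\bigr]\ct* = \mapr\inv\bigl[H_1\ H_2\bigr]\ct*$.

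Second, since $\mathcal{G}$ is banded (the setting of this section) the patterns of $\hermp[\alpha](C)$ and $\shermp[\beta](C)$ are banded too, so \cref{lem:psd-char} applies to each of these partially PSD matrices with the central completions $\cenComp*(\hermp[\alpha](C))$ and $\cenComp*(\hermp[\beta](C))$ produced by \cref{alg:psd-central}, giving bijections $f(\cdot;\cenComp*(\hermp[\alpha](C)))\colon\mathbf{\Gamma}_{\mathcal{G}}\to\compset*(\hermp[\alpha](C))[+]$ and $f(\cdot;\cenComp*(\hermp[\beta](C)))\colon\mathbf{\Gamma}_{\mathcal{G}}\to\compset*(\shermp[\beta](C))[+]$. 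Composing these with the bijection of the previous paragraph yields the asserted one-to-one correspondence $g\colon\mathbf{\Gamma}_{\mathcal{G}}\times\mathbf{\Gamma}_{\mathcal{G}}\to\compset*(C)[\alpha,\beta]$, and in particular $\compset*(C)[\alpha,\beta]$ is nonempty (it contains the image of $(0,0)$, the completion assembled from the two central PSD completions), which re-derives \cref{cor:comp-decomp}. For the closed form, observe that $\det M = \sin\alpha\cos\beta-\cos\alpha\sin\beta = -\sin(\beta-\alpha)\neq0$, hence $\mapr\inv = \tfrac{1}{\sin(\beta-\alpha)}\left[\begin{smallmatrix}\cos\beta & \cos\alpha\\ \sin\beta & \sin\alpha\end{smallmatrix}\right]\otimes\mati_n$; writing $H_1 = f(\Gamma_1;\cenComp*(\hermp[\alpha](C)))$ and $H_2 = f(\Gamma_2;\cenComp*(\hermp[\beta](C)))$, the corresponding $K$ has Hermitian part $\tfrac{\cos\beta\,H_1+\cos\alpha\,H_2}{\sin(\beta-\alpha)}$ and skew part $\tfrac{\sin\beta\,H_1+\sin\alpha\,H_2}{\sin(\beta-\alpha)}$, so $K = \hermp(K)+\ii*\shermp(K) = \tfrac{1}{\sin(\beta-\alpha)}\bigl(\ee(\ii*\beta)H_1+\ee(\ii*\alpha)H_2\bigr)$, which is exactly \cref{eqa:pb-char}.

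I expect the main obstacle to be the first step, i.e.\ verifying that the completion problem for $C$ decouples \emph{bijectively} into the two PSD completion problems for $\hermp[\alpha](C)$ and $\shermp[\beta](C)$: one must check that ``being a completion with pattern $\mathcal{G}$'', ``being partially PSD'', and membership in $\psdcone$ versus $\pbcone{[\alpha,\beta]}$ all transport correctly and invertibly across $\mapr$ and $\mapr\inv$, using the symmetry of $\eset*$, the pattern-preserving property of $\mapr^{\pm1}$, the commutation of $\mapr$ with extraction of principal submatrices, and \cref{lem:pbcone-cond}. Once this bijection is established, the two applications of \cref{lem:psd-char} and the $2\times2$ inversion are routine.
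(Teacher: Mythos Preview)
Your proposal is correct and follows exactly the approach the paper sets up in the paragraph preceding the theorem: convert the phase-bounded completion via the invertible, pattern-preserving map $\mapr$ into two independent PSD completion problems for $\hermp[\alpha](C)$ and $\shermp[\beta](C)$, apply \cref{lem:psd-char} to each, and invert $\mapr$ to obtain the closed form \cref{eqa:pb-char}. The paper does not spell out a formal proof, so your careful verification of the bijectivity, the pattern-preservation under $\mapr^{\pm1}$, and the explicit $2\times2$ inversion is precisely the intended elaboration.
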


\section{Conclusion} \label{sec:concl}
In this paper, we study the classical matrix completion and decomposition problems in terms of phase-bounded property in particular. 
Previous results on this subject \cite[{Section 9, 10}]{wangPhasesComplexMatrix2020} for banded matrices are extended to a more general matrix structure, namely the chordal structure. 
The proofs are based on establishing a duality between cones of phases-bounded matrices, and this duality also reveal the connection between the completion and the decomposition problems.
We also provide a PSD-completion-based characterization of all phase-bounded completions of a partially phase-bounded matrix with a banded pattern.

There are several unanswered questions and directions for future work. 
In a phase-bounded completion problem, one possible definition for a central phase-bounded completion is $\frac{1}{\sin(\beta-\alpha)}(
    \ee(\ii*\beta) \cenComp*(\hermp[\alpha](C)) + 
    \ee(\ii*\alpha)\cenComp*(\hermp[\beta](C)))$. A more meaningful interpretation of this unique completion would be expected. For example, does it have a banded inverse as in the case of positive definite completion \cite[Thm.~2]{gronePositiveDefiniteCompletions1984}? Does it give a maximizer of some physically interpretable quantity? 
We would also like to see a more compact characterization of all completions in lieu of decomposing the problem into two PSD problems. 
Besides, the dual-cone results for PSD completion and decomposition have been fruitfully used in decomposing and decentralizing large scale SDP problems in networked control \cite{rantzerDistributedPerformanceAnalysis2010,zhengChordalSparsityControl2019,zhengChordalFactorwidthDecompositions2021a}. We also expect the results herein can be applied to tackle large-scale phase-bounded conic programming problems arising in phase-based analysis.

\section*{Acknowledgments}
We would like to thank the referees for their careful reading and for their constructive comments on this paper, especially their suggestions on effectively exploiting the connection between the phase-bounded cone and the positive semidefinite cone which have significantly streamlined our arguments.

\appendix
\section{Proof of \cref{clm:self-dualization}} \label{pf:self-dualization}
Since we are considering the finite-dimensional case, to conclude that $\vecSp[X]$ equipped with $\iprod(\cdot)(\cdot)[T\vecSp[X]]$ is a Hilbert space, it suffices to verify that $\iprod(\cdot)(\cdot)[T\vecSp[X]]$ is a well-defined inner product. This is indeed the case since $\iprod(\cdot)(\cdot)[T\vecSp[X]] = \iprod(T \cdot)(T \cdot)[\vecSp[Y]] = \iprod( \cdot)(T\ct T \cdot)[\vecSp[Y]]$, considering that $\iprod(\cdot)(\cdot)[\vecSp[Y]]$ is an inner product and that $T\ct T$ is positive definite since $T$ is invertible and linear.

To prove the second statement, consider a convergent (or Cauchy) sequence $\{x_i\}$ in $T\inv Y$, the sequence $\{Tx_i\}$ must be a Cauchy sequence in $Y$, due to the fact that  $\|Tx_i - Tx_j \|_{\vecSp[Y]} = \|x_i - x_j \|_{T\vecSp[X]}$ where the norms are induced by the inner products. Since $Y$ is closed, there exists some $y \in Y$ such that $\lim_{i\rightarrow \infty} Tx_i = y$. The boundedness of $T$ then implies $T \lim_{i\rightarrow \infty} x_i = y$, or equivalently, $\lim_{i\rightarrow \infty} x_i =  T\inv y \in T\inv Y$. Therefore, $T\inv Y$ is indeed closed as the choice for $\{x_i\}$ is arbitrary.

To prove the third statement, note that for any $x \in \vecSp[X]$ and $y \in \vecSp[Y]$, it holds that $\iprod(x)(T\inv y)[T\vecSp[X]] = \iprod(Tx)(y)[\vecSp[Y]]$.
Thereby, $x \in T\inv \Cone\dcone*$, or equivalently $Tx \in \Cone\dcone*$, implies that $\iprod(x)(T\inv k)[T\vecSp[X]] = \iprod(Tx)(k)[\vecSp[Y]] \geq 0$ for all $k \in \Cone$. This shows that $T\inv \Cone\dcone* \subseteq \dcone*(T\inv \Cone)$. Conversely, $x_0 \notin T\inv \Cone\dcone*$ means there exists $k\in \Cone$ such that $\iprod(x_0)(T\inv k)[T\vecSp[X]] = \iprod(Tx_0)(k)[\vecSp[Y]] <0$. Hence, $x_0 \notin \dcone*(T\inv \Cone)$ as well and $\dcone*(T\inv \Cone) = T\inv \Cone\dcone*$. The remaining self-duality statement follows trivially from this equality and the proof is now complete.
 
\bibliographystyle{siamplain}
    
\end{document}